\theoremstyle{plain}
\newtheorem{theorem}{Theorem}[section]
\newtheorem{lemma}[theorem]{Lemma}
\newtheorem{proposition}[theorem]{\bf Proposition}
\theoremstyle{remark}
\theoremstyle{definition}
\newtheorem{definition}[theorem]{\bf Definition}
\newtheorem{remark}[theorem]{\bf Remark}
\numberwithin{equation}{section}
\newcommand*{\R}{\ensuremath{\mathbb{R}}}
\newcommand*{\oo}{\ensuremath{C}}
\renewcommand*{\L}{\ensuremath{\mathfrak{L}}}
\newcommand*{\D}{\ensuremath{\mathbb{D}}}
\newcommand{\C}[1]{\mathbf{C^{#1}}}
\newcommand{\lnorm}[1]{\ensuremath{\mathbb{L}}^{#1}}
\newcommand{\brz}{B_r(0)}
\newcommand{\la}{\ensuremath{\lambda}}
\newcommand*{\ul}{\ensuremath{u^\mathrm{L}}}
\newcommand*{\ur}{\ensuremath{u^\mathrm{R}}}
\newcommand*{\unu}{\ensuremath{u^\nu}}
\newcommand*{\e}{\ensuremath{\varepsilon}}
\newcommand{\ue}{\ensuremath{u^{\varepsilon}}}
\newcommand*{\iu}{\ensuremath{\bar u}}
\newcommand*{\iv}{\ensuremath{\bar v}}
\newcommand{\enu}{\varepsilon^{\nu}}
\newcommand{\tu}{\tilde u}
\newcommand*{\go}{\ensuremath{g_1}}
\newcommand*{\gt}{\ensuremath{g_2}}
\newcommand*{\got}{\ensuremath{g_1(t)}}
\newcommand*{\gtt}{\ensuremath{g_2(t)}}
\newcommand*{\ith}{\ensuremath{i\text{-th}}}
\renewcommand*{\P}{\mathcal{P}}
\newcommand*{\NP}{\mathcal{N}\mathcal{P}}
\newcommand{\xat}{\ensuremath{x_\alpha(t)}}
\newcommand{\lug}{\Lambda(\iu,\go,\gt)}
\newcommand{\xa}{x_{\alpha}}
\DeclareMathOperator*{\tvv}{\mathrm{Tot.Var.}}
\DeclareMathOperator*{\rank}{\mathrm{rank}}
\def\rn#1{\mathbb{R}^{#1}}
\newcommand{\tv}[2]{\ensuremath{\tvv_{#2}{\left(#1\right)}}}
\newcommand*{\pt}{\ensuremath{\partial_t}}      
\newcommand*{\px}{\ensuremath{\partial_x}}
\title{Local exact boundary controllability of entropy solutions to a class of hyperbolic systems of conservation laws}
\author{Tatsien Li and Lei Yu}
\date{}
\begin{document}
\maketitle

\begin{abstract}
In this paper, we study the local exact boundary controllability of entropy solutions to a class linearly degenerate hyperbolic systems of conservation laws with constant multiplicity. The authors prove the two-sided boundary controllability, one-sided boundary controllability and two-sided controllability with less controls, by applying the strategy used in \cite{Li_controllability-book} originally for classical solutions with essential modifications. Our constructive method is based on the well-posedness of semi-global solutions constructed by the limit of $ \e $-approximate front tracking solutions to the mixed initial-boundary value problem with general nonlinear boundary conditions and some further properties on both $ \e $-approximate front tracking solutions and entropy solutions, which are obtained in \cite{Li_Yu-onesided2016} and \cite{Yu-BCLD}.
\end{abstract}

\vspace{12pt}

{\textbf{Keywords}: linearly degenerate, local exact boundary control, semi-global entropy solutions, $ \e $-approximate front tracking solutions.}
	
\section{Introduction}
In this paper, we study the local exact boundary controllability for $n\times n$ hyperbolic system of conservation laws in one space dimension:
\begin{equation}
\label{e:intro}
\pt H(u)+\px G(u)=0,\qquad t\ge 0,\ 0<x<L,
\end{equation}
where $u$ is an $n$-vector valued unknown function of $ (t,x) $, $G$ and $ H$ are smooth $n$-vector valued functions of $ u $, defined on a ball $B_r(0)$ centered at the origin in $\rn n$ with suitable small radius $r$.	
\subsection{Preliminary assumptions and definitions}

	
For the system \eqref{e:intro}, we have the following hypothesis:

$\textbf{(H1)}$ System \eqref{e:intro} is hyperbolic, that is, for any given $u\in B_r(0)$, the matrix $DH(u)$ is non-singular and the matrix $(DH(u))^{-1}DG(u)$ has $n$ real eigenvalues $  \la_i(u)$ $ (i=1,...,n) $ and there exist a complete set of left (resp. right) eigenvectors $ \{ l_1(u),...,l_n(u)\} $ (resp. $ \{r_1(u),...,r_n(u)\} $).

$\textbf{(H2)}$ For any $ u\in \brz $, each eigenvalue of $(DH(u))^{-1}DG(u)$ has a constant multiplicity. To fix the idea, we suppose that
\begin{equation*}
\label{h:ev}
\la_1(u)< \cdots<\la_{k}(u)<\la_{k+1}(u)\equiv\cdots\equiv\la_{k+p}(u)<\la_{k+p+1}(u)<\cdots<\la_n(u),
\end{equation*}
where $ \la(u):=\la_{k+1}(u)\equiv\cdots\equiv \la_{k+p}(u) $ is an eigenvalue with constant multiplicity $ p\ge 1 $. When $ p=1 $, the system \eqref{e:intro} is \emph{strictly hyperbolic}.

$\textbf{(H3)}$ There are no zero eigenvalues, that is, there exist an $m\in \{1,...,n\}$ and a constant $c>0$, such that
\begin{equation}
\label{h:nonzero-char}
\lambda_m(u)<-c<0<c<\lambda_{m+1}(u), \qquad \forall u \in \brz.
\end{equation}
Under this assumption $ DG(u) $ is also a non-singular matrix. Without loss of generality, we assume that $ 1\le k<\cdots <k+p\le m $, i.e. the eigenvalue $ \la(u) $ is negative. The other situation is similar.

$\textbf{(H4)}$ All eigenvalue $ \la_i $ $ (1\le i\le n) $ are linear degenerate in the sense of Lax \cite{Lax1987}. Recall that  the $\ith$ eigenvalue is linearly degenerate if
\begin{equation*}
\label{h:LD}
D \lambda_i(u)\cdot r_i(u) \equiv 0,\qquad \forall u\in \brz,
\end{equation*}
In fact, the eigenvalue $ \la(u) $ with constant multiplicity $ p\ge 2 $ must be linearly degenerate (see Lemma \ref{l:ecm}).

$\textbf{(H5)}$ Assume that the system \eqref{e:intro} possesses a convex entropy $ \eta(u) $ together with an entropy flux $ \zeta(u) $. Recall that a continuously differentiable convex function $\eta(u):\rn n\to \R$ is called a \emph{convex entropy} of system \eqref{e:intro}, with an \emph{entropy flux} $\zeta(u):\rn n\to \R$, if we have
\begin{equation}
\label{d:nc-entropy}
\begin{split}
D\eta(u)(DH(u))^{-1} DG(u) = D \zeta(u).
\end{split}
\end{equation}

\vspace{12pt}

By \eqref{h:nonzero-char}, the boundary $ x=0 $ and $ x=L $ are non-characteristic. We prescribe the following general nonlinear boundary conditions:
\begin{align}
x=0:\ b_1(u)=\go(t), \label{bc:intro-a}\\
x=L:\ b_2(u)=\gt(t),
\end{align}
where $g_1:\R^+\to \rn{n-m}$, $g_2:\R^+\to \rn{m}$ are given boundary data functions and $b_1 \in \C1(B_r(0)$; $\rn{n-m}),\ b_2 \in \C1(B_r(0); \rn{m})$. In order to guarantee the well-posedness for the forward mixed initial-boundary value problem of system \eqref{e:intro}, we assume that

$\textbf{(H6)}$ $ b_1 $ and $ b_2 $ satisfy the following conditions, respectively (see \cite{Li-Yu_boundary-value}):
\begin{equation}
\label{h:bc}
\begin{split}
& \det \left[
Db_1(u) \cdot r_{m+1}(u)\ |\cdots |\ Db_1(u) \cdot r_{n}(u)
\right] \neq 0, \\
& \det \left[
Db_2(u) \cdot r_{1}(u)\ | \cdots |\ Db_2(u) \cdot r_{m}(u)
\right] \neq 0,
\end{split}
\quad \forall u\in \brz.
\end{equation}
Here the value of $u(t,0)$ and $ u(t,L)$ should be understood as the inner trace of the function $u(t,x)$ on the boundary $x=0$ and $ x=L$, respectively.

\vspace{12pt}

Now we can write the mixed initial-boundary value problem of \eqref{e:intro} as follows:
\begin{equation}
\label{eqt:ibvp}
\begin{cases}
\pt H(u)+\px G(u)=0,& t\ge 0,\ 0<x<L,\\
t=0:\ u=\iu(x),& 0<x<L,\\
x=0:\ b_1(u)=\go(t),& t>0,\\
x=L:\ b_2(u)=\gt(t),& t>0.
\end{cases}
\end{equation}

Before giving the definition of entropy solution,
\begin{definition}
\label{d:entropy-solution}	
For any given $T>0$, $u=u(t,x)\in \lnorm{1}((0,T)\times (0,L))$ is an \emph{entropy solution to system \eqref{e:intro}} on the domain $\D:=\{\ 0< t< T,\ 0<x<L\}$ if
\begin{enumerate}[(1)]
\item $u$ is a weak solution to \eqref{e:intro} on the domain $ \D $ in the sense of distributions, that is, for every $\phi \in C_c^1(\D)$ we have
\[
\int^T_0\int^L_0\pt\phi(t,x)H(u(t,x))+\px\phi(t,x)G(u(t,x))dxdt=0.
\]
\item $u$ is entropy admissible in the sense that there exists a convex entropy $\eta(u)$ with entropy flux $ q(u)$ for system \eqref{e:intro}, such that for every non-negative function $\phi \in C_c^1(\D)$ we have
\begin{equation}
\label{d:entropy}
\int^T_0\int^L_0\pt\phi(t,x) \eta(u(t,x))+\px\phi(t,x) q(u(t,x))dxdt\geq 0.
\end{equation}
\end{enumerate}
	
Moreover, if $ u $ also satisfies the following initial-boundary conditions:
	
\noindent (3) $ \ $ for a.e. $x\in (0,L)$, $ \lim\limits_{t\to0+}u(t,x)=\iu(x)$ and
\begin{equation*}
\lim_{x\to 0+}b_1(u(t,x))=\got,\qquad \lim_{x\to L-}b_2(u(t,x))=\gtt,\qquad \text{a.e. $t \in (0,T)$},
\end{equation*}
then we say that $ u $ is an \emph{entropy solution to the mixed initial-boundary value problem} \eqref{eqt:ibvp} on the domain $\D$.
\end{definition}

\subsection{Previous studies and main result}

Roughly speaking, the exact boundary controllability for the system  \eqref{e:intro} requires us to consider the following question: For any two given admissible initial and final state, is it possible to find suitable boundary condition as a control, such that the solution to the corresponding mixed initial-boundary value problem \eqref{eqt:ibvp} reaches the desired final state in finite time.

Most results for boundary controllability of quasilinear hyperbolic system have been obtained in the framework of classical solutions. Recently, Li and Rao systematically studied the boundary controllability for general quasilinear hyperbolic system, based on well-posedness of semi-global $ C^1 $ solutions. They proved an exact local boundary controllability, driving any given small initial data to any small final data (see \cite{Li_controllability-book} and reference therein).

It is well known that the classical solutions of quasilinear hyperbolic systems usually blow up in a finite time even though the initial data is sufficiently smooth. Thus it is natural to consider weak solutions containing shocks which concern important physical phenomenon. In \cite{Kong-control2002}-\cite{Kong-control2005}, the authors proved the global exact boundary controllability of a class of hyperbolic systems of conservation laws with linearly degenerate characteristic families in the framework of piecewise $ C^1 $ solutions. While, concerning more general entropy solutions, the study of boundary controllability of nonlinear hyperbolic systems is still vastly open. 
So far for the system case, there are only results concerning special models of hyperbolic conservation laws, for example Temple system \cite{AnconaCoclite2005} and Euler equations \cite{Glass2007}-\cite{Glass2014}.

Recently, in \cite{Li_Yu-onesided2016} and \cite{Yu-BCLD}, we proved the one-sided exact boundary null controllability of entropy solutions is studied for a class of general hyperbolic systems of conservation laws satisfying Hypothesis (H1)-(H3) and the assumption that all negative (or positive) characteristic families are linearly degenerate, by means of the constructive method used in \cite{Li_controllability-book} originally for the local exact boundary controllability in the framework of classical solutions with essential modifications.

In the present paper, following the same strategy used in \cite{Li_Yu-onesided2016} and \cite{Yu-BCLD}, we study the local exact boundary controllability of entropy solutions to a class of general linearly degenerate hyperbolic system of conservation laws \eqref{e:intro} with eigenvalue of constant multiplicity.

The main results of this paper is the following three theorems.
\begin{theorem}[\bf Two-sided boundary control]
\label{t:twoside-cl}
Let system \eqref{e:intro} and $ b_1,\ b_2 $ satisfy hypotheses (H1)-(H6). If
\begin{equation}
\label{d:tcl-two}
T>L \max\left\{\frac{1}{|\lambda_m(0)|},\frac{1}{\lambda_{m+1}(0)}\right\},
\end{equation}
then, for any given initial data $\iu $ and final data $ u_1 $ with $\displaystyle\tv{\iu}{0<x<L}+|\iu(0+)|$ and $\displaystyle\tv{u_1}{0<x<L}+|u_1(0+)|$ sufficiently small, there exist boundary controls $ g_1 $ and  $g_2$ with $\displaystyle\tv{g_1}{0<t<T}+|g_1(0+)|$ and $\displaystyle\tv{g_2}{0<t<T}+|g_2(0+)|$ sufficiently small, such that the mixed initial-boundary value problem \eqref{eqt:ibvp} admits an entropy solution $ u=u(t,x) $ on the domain $\{\ 0<t<T,0<x<L\}$, satisfying the final condition
\begin{equation}
\label{e:fc}
t=T:\quad u=u_1, \qquad \forall x\in (0,L).
\end{equation}
\end{theorem}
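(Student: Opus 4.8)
The plan is to adapt the constructive method of \cite{Li_controllability-book} to the entropy-solution setting, reducing the controllability statement to the construction of a single entropy solution $u$ on the rectangle $\D=\{\,0<t<T,\ 0<x<L\,\}$ that simultaneously attains the initial data $\iu$ at $t=0$ and the final data $u_1$ at $t=T$; the controls are then obtained a posteriori by restriction, $\go(t):=b_1(u(t,0+))$ and $\gt(t):=b_2(u(t,L-))$. The whole scheme rests on the well-posedness of semi-global entropy solutions to \eqref{eqt:ibvp}, obtained in \cite{Li_Yu-onesided2016,Yu-BCLD} as limits of \eawtss{}, together with the associated total-variation and stability estimates, which I would use both to produce the forward and backward solution pieces and to control the size of the reconstructed boundary data.

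First I would solve the forward mixed problem with initial data $\iu$ and provisionally chosen small admissible boundary data (compatible with (H6)), obtaining via the semi-global well-posedness an entropy solution on a lower portion of $\D$ that carries the influence of $\iu$. Next, reversing time via $t\mapsto T-t$, the system \eqref{e:intro} becomes a system of the same type with flux $-G$: the eigenvalues become $-\lambda_i$, so the numbers of positive and negative families are interchanged, while constant multiplicity (H2), linear degeneracy (H4) and the non-characteristic condition (H3) are all preserved, and $\eta$ remains a convex entropy with flux $-\zeta$. Crucially, because every field is linearly degenerate by (H4), the entropy balance holds with equality (entropy is conserved across contact discontinuities and in smooth regions), so an entropy solution of the original problem is simultaneously an entropy solution of the time-reversed problem. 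This is the key structural point that legitimizes the backward construction: applying the same semi-global well-posedness to the reversed problem with datum $u_1$ yields, after undoing the time reversal, an entropy solution on an upper portion of $\D$ that carries the influence of $u_1$.

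The time condition \eqref{d:tcl-two} enters precisely at the matching stage. Since $T>L/\lambda_{m+1}(0)$ and $T>L/|\lambda_m(0)|$, each point on the final line $t=T$ is reached by a rightward ($i\ge m+1$) characteristic issuing from the left boundary $x=0$ at a positive time and by a leftward ($i\le m$) characteristic issuing from the right boundary $x=L$ at a positive time; by (H6) the left boundary data governs the rightward families and the right boundary data the leftward families. This lets me split $\D$ along suitable characteristic curves into a region determined by $\iu$ (the forward piece), a region determined by $u_1$ (the backward piece), and an intermediate region whose data lie entirely on these interfaces and on the two controlled boundaries. In the intermediate region I would prescribe the solution by solving the resulting characteristic/Goursat-type problem, so that the three pieces glue into a single function of bounded variation on $\D$; because all fields are linearly degenerate, no genuinely nonlinear shocks are created and the interfaces carry only contact discontinuities, so the glued function is a genuine (conserved-entropy) entropy solution attaining $\iu$ at $t=0$ and $u_1$ at $t=T$.

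The main obstacle is exactly this matching together with the smallness of the resulting controls. One must verify that the forward and backward pieces can be glued without generating spurious, entropy-violating discontinuities along the interfaces, and that the total variation of the reconstructed boundary traces $\go,\gt$ — produced through the nonlinear wave interactions in the intermediate region — is controlled by $\tv{\iu}{0<x<L}+|\iu(0+)|$ and $\tv{u_1}{0<x<L}+|u_1(0+)|$. This is where the quantitative interaction and stability estimates for \eawtss{} from \cite{Li_Yu-onesided2016,Yu-BCLD} are indispensable: passing to the limit $\e\to 0$ in these estimates gives the smallness of $\tv{\go}{0<t<T}+|\go(0+)|$ and $\tv{\gt}{0<t<T}+|\gt(0+)|$, completing the proof.
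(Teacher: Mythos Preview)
Your reduction to constructing a single entropy solution on $\D$ attaining both $\iu$ at $t=0$ and $u_1$ at $t=T$, with the controls read off as traces, is exactly how the paper proceeds (its Lemma \ref{l:twoside-bc}). Likewise, your observation that linear degeneracy (H4) makes the entropy inequality an equality and hence legitimizes the backward problem is correct and is precisely what underlies the paper's Lemma \ref{l:esfb} and Proposition \ref{p:forward-rightward-solution}.

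The gap is in your matching step. You propose to split $\D$ along characteristic curves into a forward piece, a backward piece, and an ``intermediate region'' to be filled by a ``characteristic/Goursat-type problem''. In the BV/front-tracking framework used here, no such Goursat well-posedness is available from \cite{Li_Yu-onesided2016,Yu-BCLD}: those references supply only mixed initial-boundary value problems on rectangles (in either the $t$- or the $x$-direction). Moreover, in BV the ``characteristic interfaces'' are not classical curves along which one can impose data, and gluing along them cannot be justified by the cited stability estimates, which are stated on straight-sided domains.

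The paper sidesteps this entirely by a different matching mechanism that you do not mention: it exchanges $t$ and $x$ and solves \emph{sideways}. Concretely, from the forward solution $u_f$ and the backward solution $u_b$ one extracts only their traces on the vertical segment $x=L/2$, concatenating them into a single function $a(t)$ on $(0,T)$ (the time condition \eqref{d:tcl-two} ensures the two pieces do not overlap). One then solves a \emph{leftward} mixed problem for \eqref{e:right-intro} on $\{0<t<T,\ 0<x<L/2\}$ and a \emph{rightward} one on $\{0<t<T,\ L/2<x<L\}$, each with initial data $a(t)$ at $x=L/2$ and with boundary conditions at $t=0$ and $t=T$ reduced from $\iu$ and $u_1$. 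These are genuine rectangular IBVPs covered by Proposition \ref{t:entropy-solution}; the two halves share the same trace at $x=L/2$ and glue by Proposition \ref{p:cslrf}; and the initial/final conditions are recovered by uniqueness on the triangular determinate domains (Proposition \ref{p:triangle-rightward}, Remark \ref{r:det-domain}). This sideways construction is the essential idea your proposal is missing.
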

	
\begin{theorem}[\bf One-sided boundary control]
\label{t:oneside-cl}
Let system \eqref{e:intro} and $ b_1,\ b_2 $ satisfy hypotheses (H1)-(H6). Suppose further that
\begin{equation}
\label{h:os}
\bar m:=n-m\le m
\end{equation}
and
\begin{equation}
\label{h:os-b_1}
\rank \big( \left[
Db_1(u) \cdot r_{1}(u)\ |\cdots |\ Db_1(u) \cdot r_{m}(u)
\right] \big)= \bar m,\quad \forall u\in \brz.
\end{equation}
If
\begin{equation}
\label{d:tcl-one}
T>L \left\{\frac{1}{|\lambda_m(0)|}+\frac{1}{\lambda_{m+1}(0)}\right\},
\end{equation}
then, for any given initial data $\iu \in$ and final data $ u_1$ with $\displaystyle\tv{\iu}{0<x<L}+|\iu(0+)|$ and $\displaystyle\tv{u_1}{0<x<L}+|u_1(0+)|$ sufficiently small, and for any given boundary data $ g_1$ at $ x=0 $ with $\displaystyle\tv{g_1}{0<t<T}+|g_1(0+)|$ sufficiently small, there exists a boundary control $g_2$, acting on the boundary $x=L$, such that the mixed initial-boundary value problem \eqref{eqt:ibvp}
admits an entropy solution $ u=u(t,x) $ on the domain $\{\ 0<t<T,0<x<L\}$, satisfying the final condition \eqref{e:fc}.
\end{theorem}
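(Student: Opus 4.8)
The plan is to follow the constructive (Li--Rao) strategy of \cite{Li_controllability-book}, realized for entropy solutions by means of the front tracking machinery of \cite{Li_Yu-onesided2016} and \cite{Yu-BCLD}. The single analytic input I rely on is the well-posedness of the forward mixed problem \eqref{eqt:ibvp}: for admissible data of sufficiently small total variation it admits a unique semi-global entropy solution, obtained as the $\e\to0$ limit of \eawtss, together with uniform bounds on $\tv{u(t,\cdot)}{0<x<L}$ and on the boundary traces, and continuous dependence on the data. Two structural facts make the construction possible. First, by (H3) both boundaries are non-characteristic, so the $\bar m=n-m$ positive families are incoming at $x=0$ and the $m$ negative families are incoming at $x=L$; by (H6) the operators $b_1,b_2$ are solvable for exactly these incoming families. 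Second, by (H2) and (H4) every family is linearly degenerate, so all fronts are contact discontinuities and no waves of a new nature are created in interactions; this keeps every total variation uniformly small and, crucially, makes a backward-in-time construction legitimate.

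First I would decompose the rectangle $\{0<t<T,\,0<x<L\}$ by characteristics into three regions. Let $\mathcal A$ be the domain of determinacy of the initial data $\iu$ and the left data $g_1$, namely the lower-left part of the rectangle bounded above by the slowest leftward contact issued from the corner $(0,L)$, which reaches $x=0$ at time $L/|\lambda_m(0)|$. Let $\mathcal B$ be the domain of backward determinacy of the final data $u_1$ and $g_1$, the upper part bounded below by the slowest rightward contact arriving at $(T,L)$, whose foot on $x=0$ is at time $T-L/\lambda_{m+1}(0)$. By the forward (resp. backward) well-posedness recalled above, $u$ is already uniquely determined on $\mathcal A$ by $(\iu,g_1)$ and on $\mathcal B$ by $(u_1,g_1)$, independently of any control. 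The time condition \eqref{d:tcl-one} is exactly what guarantees $L/|\lambda_m(0)|\le T-L/\lambda_{m+1}(0)$, so that $\mathcal A$ and $\mathcal B$ do not overlap along $x=0$ and leave a genuine \emph{control region} $\mathcal C$ adjacent to $x=L$: bordered below by $\mathcal A$, above by $\mathcal B$, on the right by $x=L$, and on the left by the segment of $x=0$ over the middle time band $[\,L/|\lambda_m(0)|,\,T-L/\lambda_{m+1}(0)\,]$.

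The remaining task is to fill $\mathcal C$ with an entropy solution that (i) agrees with $u|_{\mathcal A}$ and $u|_{\mathcal B}$ across the two contact interfaces, (ii) satisfies $b_1(u)=g_1$ along the middle band of $x=0$, and (iii) is generated by a single control $g_2$ on $x=L$. This is where \eqref{h:os} and \eqref{h:os-b_1} enter. The control $g_2$ fixes, through $b_2$, the $m$ incoming negative families at $x=L$; these propagate leftward and are reflected at $x=0$ subject to $b_1(u)=g_1$, returning as the $\bar m=n-m$ positive families. The inequality $\bar m\le m$ provides at least as many controllable negative families as positive families that must be produced, while the rank condition \eqref{h:os-b_1} ensures that the reflection at $x=0$ spans the full $\bar m$-dimensional space of positive data required to match $u|_{\mathcal B}$. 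Together they make the associated boundary map a submersion near $u=0$, so an implicit-function / contraction argument on the trace data---using the continuous dependence from the well-posedness theory---should yield a small $g_2$ realizing (i)--(iii). Concatenating $u|_{\mathcal A}$, $u|_{\mathcal B}$ and the solution on $\mathcal C$ then gives an entropy solution on the whole rectangle meeting \eqref{e:fc} and the trace conditions of Definition \ref{d:entropy-solution}(3).

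The hard part will be executing this matching for genuine $BV$ entropy solutions rather than for $C^1$ solutions. I expect to have to run the whole argument at the level of \eawtss: to track each front through the interior and, above all, through its interactions with the two boundaries, using (H2) and (H4) to ensure that only contact discontinuities appear and that the Glimm-type interaction and boundary functionals stay uniformly small, so that all total variations remain small and the limit inherits entropy admissibility. The genuinely delicate points are (i) the stability of the one-sided matching under the limit $\e\to0$, i.e.\ showing that the finite-dimensional inversion behind the choice of $g_2$ survives passage to the limit, and (ii) proving that the limit entropy solution attains the final data $u_1$ \emph{exactly} at $t=T$ (as an a.e./$\lnorm1$ trace) and satisfies $b_1(u)=g_1$ as an inner trace, rather than merely approximately. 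Establishing these last two facts is, I expect, the technical core of the proof.
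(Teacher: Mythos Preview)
Your geometric picture (regions $\mathcal A$, $\mathcal B$, $\mathcal C$) is reasonable, but the mechanism you propose for filling $\mathcal C$---an implicit-function/contraction argument on the map $g_2\mapsto(\text{upper-interface trace})$---is \emph{not} the Li--Rao constructive method and carries a genuine gap. The stability estimate from the well-posedness theory gives only Lipschitz dependence in $L^1$, so there is no differentiable structure on which to invoke an implicit-function theorem, and no contraction constant is in sight. You correctly flag this as ``the hard part,'' but offer no mechanism to carry it out; your reading of \eqref{h:os}--\eqref{h:os-b_1} as a surjectivity condition for a reflection operator at $x=0$ is also not how these hypotheses are actually used.

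The paper (and the Li--Rao method it follows) bypasses this difficulty entirely by a device you do not mention: \emph{exchanging the roles of $t$ and $x$}. One first solves forward from $\bar u$ on $\{0<t<T_1\}$ with the given $g_1$ at $x=0$ and an \emph{arbitrary} artificial condition at $x=L$, and backward from $u_1$ on $\{T-T_2<t<T\}$ with $b_1(u)=g_1$ at $x=0$ supplemented by $m-\bar m$ artificial conditions---this is where \eqref{h:os} and \eqref{h:os-b_1} enter, guaranteeing that the backward problem at $x=0$ is well posed. One then reads off the two traces at $x=0$, concatenates them into a single $a(t)$ on $(0,T)$ satisfying $b_1(a)=g_1$, and solves the \emph{rightward} mixed problem $\partial_x G(u)+\partial_t H(u)=0$ with ``initial'' data $u=a(t)$ at $x=0$ and boundary data at $t=0$, $t=T$ reduced from $\bar u$, $u_1$. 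Proposition~\ref{t:entropy-solution}, applied sideways, produces the solution $u$ directly on the whole rectangle---no inversion, no matching across $\partial\mathcal C$; the control is simply $g_2:=b_2(u(\cdot,L-))$. Domain-of-dependence (Proposition~\ref{p:triangle-rightward}, Remark~\ref{r:det-domain}) then forces $u$ to coincide with the forward and backward pieces on the appropriate triangles, giving \eqref{e:ic} and \eqref{e:fc} exactly. What makes the sideways problem legitimate in the entropy/BV setting is Proposition~\ref{p:forward-rightward-solution} (and Lemma~\ref{l:esfb}): for fully linearly degenerate systems, forward, backward, and leftward/rightward $\varepsilon$-solutions, hence their limits, are equivalent. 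This sideways-solving idea is the missing ingredient in your plan.
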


\begin{theorem}[\bf Two-sided boundary control with less controls]
\label{t:ts-lcl}
Let system \eqref{e:intro} and $ b_1,\ b_2 $ satisfy hypotheses (H1)-(H6). Suppose further that \eqref{h:os} holds.
Letting $ \tilde{b}_2:\brz\to \rn{\bar m} $ be the vector-value function consists of the first $ \bar m $ components of $ b_2 $, without loss of generality, we suppose that
\begin{equation*}
\label{h:bc-less}
\rank \big( \left[
D\tilde b_2(u) \cdot r_{m+1}(u)\ |\cdots |\ D\tilde b_2(u) \cdot r_{\bar n}(u)\right] \big)= \bar m,\quad \forall u\in \brz.
\end{equation*}
If $ T>0 $ satisfies \eqref{d:tcl-one}, then, for any given initial data $\iu$ and final data $ u_1$ with $\displaystyle\tv{\iu}{0<x<L}+|\iu(0+)|$ and $\displaystyle\tv{u_1}{0<x<L}+|u_1(0+)|$ sufficiently small, and for any given part of boundary data $ \tilde g_2:(0,T)\to \rn{\bar m} $ with $\displaystyle\tv{\tilde g_2}{0<t<T}+|\tilde g_2(0+)|$ sufficiently small, there exists boundary control $g_1$ at $x=0$ and boundary control $\hat g_2:(0,T)\to \rn{m-\bar m} $, such that the mixed initial-boundary value problem \eqref{eqt:ibvp} associated with
\begin{equation*}
g_2=
\begin{pmatrix}
\tilde{g}_2\\
\hat g_2
\end{pmatrix}
\end{equation*}
admits an entropy solution $ u=u(t,x) $ on the domain $\{\ 0<t<T,0<x<L\}$, satisfying the final condition \eqref{e:fc}.
\end{theorem}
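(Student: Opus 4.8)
The plan is to follow the constructive strategy of \cite{Li_Yu-onesided2016,Yu-BCLD}: build a candidate entropy solution on $[0,T]\times[0,L]$ that realizes the transition from $\iu$ to $u_1$, and then read the controls $g_1$ and $\hat{g}_2$ off its boundary traces. Write $T_1:=L/\la_{m+1}(0)$ and $T_2:=L/|\la_m(0)|$, so that \eqref{d:tcl-one} reads $T>T_1+T_2$. The decisive structural fact is hypothesis (H4): since every characteristic family is linearly degenerate, all elementary waves are contact discontinuities, which satisfy the Rankine--Hugoniot and entropy relations with equality and are therefore reversible in time. Consequently the time-reversed system obtained from $t\mapsto T-t$ is again a hyperbolic system of the same type, with eigenvalues $-\la_i$, and the backward-in-time evolution of an entropy solution is itself a well-posed semi-global entropy solution obtained as a limit of \eawtss. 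This is what legitimizes a construction ``from both ends'' at the level of entropy solutions, mirroring the classical $C^1$ theory.

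First I would construct the forward piece: using the well-posedness of semi-global entropy solutions for the forward mixed problem, solve \eqref{eqt:ibvp} from $t=0$ with initial data $\iu$, feeding in the prescribed data $\tilde{g}_2$ at $x=L$ together with, for the moment, convenient choices of the remaining inputs. Symmetrically, I would construct the backward piece by solving the time-reversed mixed problem from its ``initial'' data $u_1$. Here the prescribed part $\tilde{g}_2$ must be honored on the whole right boundary, and this is exactly where the rank hypothesis enters: for the reversed system the incoming characteristics at $x=L$ are the original positive families $m+1,\dots,n$ (there are $\bar{m}$ of them), so prescribing the $\bar{m}$-vector condition $\tilde{b}_2(u)=\tilde{g}_2$ is admissible precisely because
\[
\det\big[\,D\tilde{b}_2(u)\,r_{m+1}(u)\ |\cdots|\ D\tilde{b}_2(u)\,r_{n}(u)\,\big]\neq 0,\qquad \forall u\in\brz,
\]
which is the full-rank condition in the statement. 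The assumption $\bar{m}\le m$ guarantees that this is a square, invertible system and that the number of genuinely free controls, namely $\bar{m}+(m-\bar{m})=m$, matches the steering budget available under the one-sided time condition \eqref{d:tcl-one}.

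Next I would match the two pieces. The time condition $T>T_1+T_2$ provides a middle time-layer in which neither the forward nor the backward domain of determinacy constrains the solution on the controlled boundary $x=0$ nor on the controlled components $\hat{g}_2$ at $x=L$, leaving enough room to interpolate between the forward trace (carrying the information of $\iu$) and the backward trace (carrying that of $u_1$), while the prescribed components $\tilde{g}_2$ stay fixed throughout. Gluing along a common characteristic partition and invoking the compactness and stability of the front-tracking approximations yields a single admissible entropy solution $u$ on $[0,T]\times[0,L]$ with $u(0,\cdot)=\iu$, $u(T,\cdot)=u_1$, and $\tilde{b}_2(u(t,L))=\tilde{g}_2(t)$. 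Finally I would set $g_1(t):=b_1(u(t,0))$ and $\hat{g}_2(t):=\hat{b}_2(u(t,L))$, where $\hat{b}_2$ collects the remaining $m-\bar{m}$ components of $b_2$, and verify via the BV and trace estimates of \cite{Li_Yu-onesided2016,Yu-BCLD} that $\tv{g_1}{0<t<T}+|g_1(0+)|$ and $\tv{\hat{g}_2}{0<t<T}+|\hat{g}_2(0+)|$ remain small, so that $u$ indeed solves \eqref{eqt:ibvp} with $g_2=(\tilde{g}_2,\hat{g}_2)^{\top}$.

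The main obstacle I anticipate is the matching step: ensuring that the forward and backward entropy solutions can be joined into one globally admissible entropy solution while $\tilde{g}_2$ is respected on all of $x=L$ and all total variations stay uniformly small. Two points need care. First, reversibility is only as good as linear degeneracy makes it, so the backward construction must be carried out entirely within the contact-discontinuity calculus of the cited works, checking that no genuinely nonlinear compressive behavior is introduced at the junction. Second, the interaction at $x=L$ between the prescribed condition $\tilde{b}_2(u)=\tilde{g}_2$ and the outgoing waves must be controlled quantitatively: the invertibility above gives solvability at each boundary time, but one must propagate the resulting estimates through the whole front-tracking scheme to keep the controls small, which is the technical heart of the argument.
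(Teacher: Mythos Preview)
Your framework is sound: the forward/backward dissection, the role of linear degeneracy in making the backward evolution well-posed, and your identification of where the rank hypothesis on $\tilde b_2$ enters (it is precisely the admissibility condition \eqref{h:bc} for the $\bar m$ conditions $\tilde b_2(u)=\tilde g_2$ at $x=L$ in the time-reversed problem) all match the paper. But the matching step, which you rightly flag as the obstacle, is not actually carried out. ``Gluing along a common characteristic partition'' and ``interpolating in a middle time-layer'' do not name a mechanism that produces a single entropy solution on the whole rectangle. The forward solution $u_f$ lives only on $\{0<t<T_1\}$ and the backward solution $u_b$ only on $\{T-T_2<t<T\}$; direct gluing leaves the slab $\{T_1<t<T-T_2\}$ empty, and filling it by steering from $u_f(T_1,\cdot)$ to $u_b(T-T_2,\cdot)$ is the very controllability statement you are trying to prove.

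The paper resolves this by exchanging the roles of $t$ and $x$. One records the traces $u_f(\cdot,L-)$ on $(0,T_1)$ and $u_b(\cdot,L-)$ on $(T-T_2,T)$, interpolates them (using $T>T_1+T_2$) to a small-BV function $a:(0,T)\to\rn n$ with $\tilde b_2(a(t))=\tilde g_2(t)$ throughout, and then solves the \emph{leftward} mixed problem for $\px G(u)+\pt H(u)=0$ with ``initial'' data $u=a(t)$ on $x=L$ and ``boundary'' data on $t=0$ and $t=T$ reduced from $\iu$ and $u_1$ via the left eigenvectors. Proposition~\ref{p:forward-rightward-solution} says this leftward solution is simultaneously a forward entropy solution of \eqref{e:intro}, and Proposition~\ref{p:triangle-rightward} with Remark~\ref{r:det-domain} force it to coincide with $u_f$ and $u_b$ on the triangular determinacy domains abutting $t=0$ and $t=T$, hence to satisfy \eqref{e:ic} and \eqref{e:fc}. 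This sideways solve is the constructive core of the method (it is what replaces the $C^1$ argument of \cite{Li_controllability-book} at the BV level); without it the argument does not close.
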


\begin{remark}
In \cite{Li_Yu-onesided2016}, we have proved the one-sided boundary null controllability for a wilder class of hyperbolic conservation laws and give the sharp time of control. Although the corresponding two-sided boundary null controllability can be obtained as a corollary, the estimate of time for controllability is no longer optimal. In this paper, by further assumption that all eigenvalue are linearly degenerate, we obtain systematically the exact boundary controllability for the system \eqref{e:intro}, where the final state can be any small BV function close to the equilibrium, and we obtain the estimate \eqref{d:tcl-two} and \eqref{d:tcl-one} on the time for the two-sided boundary control and the one-sided boundary control (or two-sided boundary with less control) are sharp, respectively. While our results as that in \cite{Li_Yu-onesided2016} still have advantages such as the number  $ n $ of equations in the system \eqref{e:intro} can be any integer $ \ge 1 $ and the general nonlinear boundary conditions are taken into account.
\end{remark}


\subsection{Main ideas of proof and structure of paper}
\label{s:structure}
As in \cite{Li_Yu-onesided2016}, throughout this paper, by the \emph{solution} to a mixed initial-boundary value problem for the system \eqref{eqt:ibvp}, we mean the limit of a convergent sequence of corresponding $ \e $-approximate front tracking solutions (see Definition \ref{def:epsilon-sol}). This kind of solution is actually an entropy solution, provided that the system possesses a convex entropy.


Our treatment follows closely to our paper \cite{Li_Yu-onesided2016}, which concerns a class of hyperbolic systems of conservation laws with the assumption that all negative (or positive) characteristic families are linearly degenerate. Under this essential assumption, we can obtain the equivalence between the solution to the forward problem \eqref{e:intro} and the rightward problem
\begin{equation}
\label{e:right-intro}
\px G(u)+\pt H(u)=0,
\end{equation}
where $ x $ is regarded as the ``time" variable and $ t $ as the ``space" variable. And vice versa.

In this paper, under stronger assumption that all characteristic families are linear degenerate, we can now obtain analogous boundary controllability as for the classical solutions case treated in \cite{Li_controllability-book}. The key idea is that now the entropy inequality \eqref{d:entropy} is actually an equality. Then we can solve backward the system \eqref{e:intro} in the same way as for the forward case and the solutions are equivalent in both senses.

\vspace{6pt}
The paper is organized as follows. In Section \ref{s:semi-global} we recall the results of well-posedness of semi-global solutions as the limits of $ \e $-approximate solutions, which are mainly proved in \cite{Li_Yu-onesided2016} and \cite{Yu-BCLD}. In Section \ref{s:lebc}, we give the proofs of Theorem \ref{t:twoside-cl}-\ref{t:ts-lcl} following the main strategy in \cite{Li_controllability-book}.

\section{Semi-global solutions}
\label{s:semi-global}
In this section, we collect results about the well-posedness of semi-global solutions to the mixed initial-boundary value problem \eqref{eqt:ibvp}, which were proved in \cite{Li_Yu-onesided2016} and \cite{Yu-BCLD}. In fact, all results (except some in Section  \ref{s:further-properties}) hold for more general systems whose characteristic families are either genuinely nonlinear or linearly degenerate.

Throughout this paper, in order to avoid abusively using constants, we denote by the notation $\oo$ a positive constant which depends only on system \eqref{e:intro}, constant $ L $ and functions $b_1,b_2$, but is independent of the special choice of initial data $ \iu $, boundary data $
\go,\gt $ and time $T$. Moreover, we denote by $ C(T) $ a positive constant which depends also on time $ T $.

\subsection{Preliminaries}
\label{s:preliminary}
For the system \eqref{e:intro}, we normalize the left and right eigenvectors $l_i(u)$ and $ r_i(u)$ $ (i=1,...,n) $ of $(DH)^{-1}DG(u)$, so that
\[
l_i(u) \cdot r_j(u) \equiv\delta_{ij},\quad i,j =1,...,n,
\]
where $ \delta_{ij} $ is the Kronecker symbol.

For any given $u\in \brz$, when the $ \it $ eigenvalue $ \la_i(u) $ is simple,  let $\sigma\mapsto R_i(\sigma)[u]$ denote the $i$-rarefaction curve passing through $u$ 
and let $\sigma\mapsto S_i(\sigma)[u]$ denote the $i$-shock curve passing through $u$. 
If $ \la_i(u) $ is linearly degenerate, we know that the $ i$-rarefaction curve and $ i $-shock curve coincide. 

For the eigenvalue $ \la(u) $ with constant multiplicity $ p\ge 2 $, one has the following
\begin{lemma}[see \cite{Frei-LD}]
\label{l:ecm}
The eigenvalue $ \la(u) $ with constant multiplicity $ p\ge 2 $ must be linearly degenerate, that is
\[
\nabla \la(u) \cdot r_{j}(u)\equiv 0 \quad (j=k+1,...,k+p), \quad \forall u\in \brz.
\]
Moreover, for any $ u^-\in \brz $, there exists a $ p $-dimensional connected smooth manifold $\Sigma(u^-)$ in a neighborhood of $ u^- $ with $ u^-\in \Sigma(u^-) $, where $ \Sigma(u^-) $ can be expressed by the following smooth parametric representation
\[
u=\Psi_{k+1}(\sigma_{k+p},...,\sigma_{k+1})[u^-], \quad \sigma_{j}\in [-\sigma_0,\sigma_0] \quad (j=k+1,...k+p),
\]
for some small $ \sigma_0 $, such that
\[
{\partial \over \partial \sigma_{j}}u(0,..,0)[u^-]=r_{j},\quad (j=k+1,...,k+p).
\]
In other words, for any $ u^+\in \Sigma(u^-)$, there exist uniquely small numbers $ \sigma_{k+1},...,\sigma_{k+p}$ such that $u=\Psi_{k+1}(\sigma_{k+p},...,\sigma_{k+1})[u^-]  $, and any discontinuity associate with the eigenvalue $ \la(u) $
\begin{equation}
\label{e:cd}
u_{k+1}=
\begin{cases}
u^+, & x>st,\\
u^-, &x<st
\end{cases}
\end{equation}
is always a contact discontinuity, i.e. we have
\begin{equation}
\label{condition-1}
\begin{cases}
G(u^+)-G(u^-)=s (H(u^+)-H(u^-),\\
s=\la(u^-)=\la(u^-)
\end{cases}
\end{equation}
On the other hand, if $ u^+ $ is sufficiently close to $ u^- $, then the solution is a contact discontinuity implies that
\begin{equation}
\label{condition-2}
\begin{cases}
u^+\in\Gamma(u^-),\\
s=\la(u^-),
\end{cases}
\end{equation}
which means that on this contact discontinuity \eqref{e:cd}, condition \eqref{condition-1} is equivalent to condition \eqref{condition-2}.

\end{lemma}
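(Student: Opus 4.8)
The plan is to prove the three assertions in turn: linear degeneracy of $\la$, the existence of the $p$-dimensional manifold $\Sigma(u^-)$ with its parametrization, and the equivalence of the two descriptions of a contact discontinuity. Throughout write $A(u):=(DH(u))^{-1}DG(u)$, so that $A r_i=\la_i r_i$ and $l_i A=\la_i l_i$, and let $\mathcal E(u):=\operatorname{span}\{r_{k+1}(u),\dots,r_{k+p}(u)\}$ be the eigenspace of the degenerate value $\la(u)$ (geometric equal to algebraic, since the eigenvectors are complete). For the linear degeneracy the cleanest route is to pass to the variables $w:=H(u)$: since $DH$ is nonsingular, $H$ is a local diffeomorphism and the system becomes the genuine conservation law $\pt w+\px\tilde G(w)=0$ with $\tilde G:=G\circ H^{-1}$ and flux Jacobian $B:=D\tilde G=DG\,(DH)^{-1}=DH\,A\,(DH)^{-1}$. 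Thus $B$ is similar to $A$, with the same eigenvalues, right eigenvectors $\tilde r_i=DH\,r_i$ and left eigenvectors $\tilde l_i=l_i(DH)^{-1}$; a chain-rule computation shows $\nabla_w(\la\circ H^{-1})\cdot\tilde r_i=\nabla_u\la\cdot r_i$, so linear degeneracy is invariant under this change of variables. In the $w$-variables I would run the classical Boillat argument: differentiating $B\tilde r_i=\la\tilde r_i$ along $\tilde r_j$ and $B\tilde r_j=\la\tilde r_j$ along $\tilde r_i$ and subtracting, the term $(\nabla_{\tilde r_j}B)\tilde r_i-(\nabla_{\tilde r_i}B)\tilde r_j$ vanishes because $B$ is a Jacobian (equality of mixed second partials of $\tilde G$), leaving $(B-\la I)(\nabla_{\tilde r_j}\tilde r_i-\nabla_{\tilde r_i}\tilde r_j)=(\nabla_{\tilde r_j}\la)\tilde r_i-(\nabla_{\tilde r_i}\la)\tilde r_j$. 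Contracting with $\tilde l_i$ kills the left side, and using $p\ge2$ to pick an index $i\ne j$ in the family the right side reduces to $\nabla_{\tilde r_j}\la$; hence $\nabla_{\tilde r_j}\la=0$, and translating back gives $\nabla\la\cdot r_j\equiv 0$ for $j=k+1,\dots,k+p$.

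For the second assertion I would obtain $\Sigma(u^-)$ from the Frobenius theorem applied to the distribution $\mathcal E$. Now that $\la$ is linearly degenerate, the identity above (in the $w$-variables, where the symmetry holds) reads $(B-\la I)(\nabla_{\tilde r_j}\tilde r_i-\nabla_{\tilde r_i}\tilde r_j)=0$, so the Lie bracket $[\tilde r_i,\tilde r_j]$ lies in $\ker(B-\la I)$ for all $i,j$ in the family; thus the eigenspace distribution is involutive, and since involutivity is a diffeomorphism-invariant property this holds equally for $\mathcal E$ in the $u$-variables. The Frobenius theorem then produces through every $u^-\in\brz$ a $p$-dimensional integral manifold $\Sigma(u^-)$ tangent to $\mathcal E$, and the smooth parametrization $\Psi_{k+1}(\sigma_{k+p},\dots,\sigma_{k+1})[u^-]$ with $\partial_{\sigma_j}\Psi|_0=r_j$ is built by successively integrating the (commuting modulo the distribution) vector fields $r_{k+1},\dots,r_{k+p}$. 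A crucial by-product is that $\la$ is constant on each $\Sigma(u^-)$, since $\nabla\la$ annihilates the tangent space $\mathcal E$.

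Finally, for the equivalence of the two descriptions of a contact discontinuity, the implication from \eqref{condition-2} to \eqref{condition-1} follows by integrating along a path $\gamma(\tau)\subset\Sigma(u^-)$ from $u^-$ to $u^+$ with $\gamma'\in\mathcal E$: indeed
\[
\tfrac{d}{d\tau}\big(G(\gamma(\tau))-\la(u^-)H(\gamma(\tau))\big)=DH(\gamma)\big(A(\gamma)-\la(u^-)I\big)\gamma'(\tau)=0,
\]
because $\la\equiv\la(u^-)$ on $\Sigma(u^-)$ and $\gamma'\in\ker(A-\la I)$, so integrating yields the Rankine--Hugoniot relation $G(u^+)-G(u^-)=s\,(H(u^+)-H(u^-))$ with $s=\la(u^-)=\la(u^+)$. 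For the converse I would argue by a dimension count through the implicit function theorem: the set of states $u^+$ near $u^-$ admitting a jump of the form \eqref{e:cd} with $s$ near $\la(u^-)$ is, by constant multiplicity together with the non-degeneracy of the other $n-p$ fields, a $p$-dimensional manifold through $u^-$; since it contains $\Sigma(u^-)$ by the forward implication and both are $p$-dimensional, they coincide, forcing $u^+\in\Gamma(u^-)$ and $s=\la(u^-)$. The step I expect to be the main obstacle is precisely this converse: one must rule out contributions of the remaining characteristic families to an admissible jump near $u^-$ and verify that the contact-discontinuity locus is genuinely a $p$-dimensional manifold, which is where constant multiplicity (rather than mere linear degeneracy) is essential; by comparison the Boillat computation and the Frobenius construction are routine.
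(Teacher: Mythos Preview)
The paper does not supply its own proof of this lemma: it is stated with the attribution ``see \cite{Frei-LD}'' and no argument is given. So there is nothing in the paper to compare your proposal against.

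That said, your proposal is essentially the classical proof one finds in the cited literature (Boillat, Freist\"uhler). The reduction to the variables $w=H(u)$ so that the flux Jacobian is an honest Jacobian and the symmetry $(\nabla_{\tilde r_j}B)\tilde r_i=(\nabla_{\tilde r_i}B)\tilde r_j$ is available is the right move; the contraction with $\tilde l_i$ using $p\ge 2$ to get $\nabla_{\tilde r_j}\la=0$ is the standard Boillat computation. The involutivity of $\mathcal E$ and the Frobenius construction of $\Sigma(u^-)$ are likewise correct, as is the forward implication \eqref{condition-2}$\Rightarrow$\eqref{condition-1} by integrating $DH(A-\la I)\gamma'=0$ along a leaf. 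Your self-identified weak point, the converse, is indeed where the work lies, but your dimension-count strategy is the right one: the Hugoniot locus through $u^-$ near the speed $\la(u^-)$ is, under constant multiplicity, exactly $p$-dimensional (this is the content of the Freist\"uhler result), and since $\Sigma(u^-)$ sits inside it with the same dimension, they coincide. One small point to tighten: you should make explicit that the eigenprojector onto $\mathcal E$ is smooth (which follows from constant multiplicity), since that is what guarantees the distribution $\mathcal E$ is a smooth subbundle to which Frobenius applies.
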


\vspace{12pt}

\subsection{Solutions as the limit of $\e$-approximate front tracking solutions}
\label{s:entropy-sol}
We first give the definition of $ \e $-approximate front tracking solutions, which is the same as the one given in \cite{Li_Yu-onesided2016} but modified for the linear degenerate case.
\begin{definition}
\label{def:epsilon-sol}
For any given time $ T>0 $ and any fixed $\e>0$, we say that a continuous map
\[
t\mapsto \ue(t,\cdot)\in \lnorm 1(0,L), \quad \forall t\in (0,T)
\]
is an \emph{$\e$-approximate front tracking solution} to system \eqref{e:intro} if
\begin{enumerate}[(1)]
\item $\ue=\ue(t,x)\in \brz $ for all $(t,x)\in \bar \D:=\{0\le t\le T,\ 0\le x\le L\}$ as a function of two variables, and is piecewise constant with discontinuities occurring along finitely many straight lines with non-zero slope in the domain $\bar \D$. Jumps can be of two types: physical fronts (contact discontinuities) and non-physical fronts, denoted by $\P$ and $\NP$, respectively.
\item Along each physical front $x=\xat\ (\alpha \in \P)$, the left and right limits of $\ue(t,\cdot)$ on it are connected by
\begin{align*}
&\ur=R_{k_{\alpha}}(\sigma_{\alpha})[\ul], \quad \text{if $ k_{\alpha}\in\{1,...,k,k+p+1,...,n\} $},\\
&\ur=\Psi_{k_{\alpha}}(\sigma_{\alpha,k},...,\sigma_{\alpha,1})[\ul], \quad \text{if $ k_{\alpha}= k+1 $},
\end{align*}
where $\ul:=\ue(t,\xat-)$, $\ur:=\ue(t,\xat+)$, and $\sigma_{\alpha}$ or $(\sigma_{\alpha,{p}},...,\sigma_{\alpha,1}) $ is the wave amplitude. Moreover, the speed of the front approximately satisfies the Rankine-Hugoniot relation, that is
\begin{equation}
\label{d:approx-shock}
|\dot x_\alpha-\lambda_{k_{\alpha}}(\ul)|=\oo \e.
\end{equation}
\item All non-physical fronts $x=\xat\ (\alpha\in \NP)$ have the constant speed $\dot x_{\alpha}\equiv \hat \lambda$ with either $\hat \lambda>\sup_{\substack{u\in \brz\\1\le i\le n}}|\lambda_i(u)|$ or $0<\hat \lambda<c$, where $ c $ is given by \eqref{h:nonzero-char}. Moreover, the total strength of all non-physical waves in $\ue(t,\cdot)$ is uniformly bounded by $\e$, namely,
\[
\sum_{\alpha\in\NP}|\ue(t,\xa+)-\ue(t,\xa-)|\le \e,\qquad \forall t\in (0,T).
\]
\end{enumerate}
	
Moreover, if the initial and boundary values of $\ue$ satisfy approximatively the initial and boundary conditions, that is,
\begin{equation*}
\|\ue(0,\cdot)-\bar u\|_{\lnorm 1(0,L)}\leq \e,
\end{equation*}
\begin{equation*}
\label{d:aprrox_boundary}
\|b_1\big(\ue(\cdot,0+)\big)-g_1\|_{\lnorm{1}(0,T)}\leq \e,\quad \|b_2\big(\ue(\cdot,L-)\big)-g_2\|_{\lnorm{1}(0,T)}\leq \e,
\end{equation*}
then $\ue=\ue(t,x)$ is called the $\e$-approximate front tracking solution to the initial-boundary value problem \eqref{eqt:ibvp}. For shortness in what follows, we will call the $\e$-approximate front tracking solution just as the \emph{$\e$-solution}.
\end{definition}

\vspace{6pt}
For any given $ T>0 $, any given initial-boundary data $ (\iu,g_1,g_2) $ and any given $ \e>0 $ small enough, if $ \lug $ is sufficiently small, we can construct an $ \e $-solution to problem \eqref{eqt:ibvp} on the domain $ \D $ via an algorithm given in \cite{Li_Yu-onesided2016}, such that for all $ \e$ small, the maps $t \mapsto \ue(t,\cdot)$ are uniformly Lipschitz continuous in $ \lnorm 1 $ norm with respect to $ t $ and $ \displaystyle{\tv{\ue(t,\cdot)}{0<x<L} }$ remain sufficiently small uniformly for all $ t\in (0,T) $. Moreover, the approximate stability holds for $ \ue $ on the triangle domains
\[
\L(x_1):=\left\{(t,x)\ |\ 0<t<\hat \tau_1(x_1),0<x< x_1(\hat \tau_1(x_1)-t)/{\hat \tau_1(x_1)}\right\}
\]
and
\[
\mathfrak{R}(x_0):=\left\{(t,x)\ |\ 0<t<\hat\tau_2(x_0),\ (L-x_0)t/\hat{\tau}_2(x_0)+x_0<x<L \right\}
\]
for any given $ x_1\in (0,L] $ and $ x_0\in [0,L) $, where
\begin{equation*}
\hat{\tau}_1(x_1)=x_1\min_{u\in \brz}\{|\lambda_1(u)|^{-1}\}\quad \text{and}\quad
\hat \tau_2(x_0)=(L-x_0) \min_{u\in \brz}\{\lambda_n(u)^{-1}\}.
\end{equation*}
By induction, we obtain the approximate stability of $ \e $-solutions on the domain $ \D $.

Now, fix a sequence  $\enu\searrow 0$ as $ \nu\to +\infty $. By Helly's Theorem \cite[Theorem 2.3]{Bressan2000}, we can extract a subsequence of $ \{\unu\} $ which converges to a limit function $u=u(t,x)$ in $\lnorm 1((0,T)\times(0,L))$. In fact, we have the following theorem.

\begin{proposition}
\label{t:entropy-solution}
For any fixed $T>0$, there exist positive constants $\delta$ and $ C(T) $ such that for every initial-boundary data $(\iu,g^u_1,g^u_2)$ with
\begin{equation*}
\Lambda(\iu,g^u_1,g^u_2) \le \delta,
\end{equation*}
where
\begin{equation*}
\label{def:lug}
\begin{split}
\lug:=\tv{\iu}{0<x<L}+&|\iu(0+)|+\sum_{i=1,2}\tv{g_i}{0<t<T}\\
+&|b_1(\iu(0+))-\go(0+)|+|b_2(\iu(L-))-\gt(0+)|,
\end{split}
\end{equation*}
such that problem \eqref{eqt:ibvp} associated with the initial-boundary data $ (\iu,g^u_1,g^u_2) $ admits a solution $ u=u(t,x) $ on the domain $ \D=\{0<t<T,\ 0<x<L\} $ as the limit of a sequence of $\e$-solutions, satisfying
\begin{equation}
\label{bd:tv}
\begin{split}
\tv{u(t,\cdot)}{0<x<L}\leq  C(T) \lug, \quad \forall t\in (0,T),\\
\|u(t,\cdot)-u(s,\cdot)\|_{\lnorm 1(0,L)}\leq C(T) |t-s|,\quad \forall t,s \in (0,T)
\end{split}
\end{equation}
and $u(t,x)\in \brz$ for a.e. $(t,x)\in \D  $.
	
Moreover, if  $ v=v(t,x) $ is a solution as the limit of a sequence of $\enu$-solutions of system \eqref{e:intro}, associated with the initial-boundary data $(\iv,g^v_1,g^v_2)$ with $ \Lambda(\iv,g^v_1,g^v_2)\le \delta $, then for any given $ x_0\in [0,L) $ and $ x_1\in (0,L] $, there exist a positive constant $ C $ independent of $ x_0 $ and $ x_1 $, such that
\begin{equation}
\label{est:stability-left-triangle}
\begin{split}
&\|u(t,\cdot)-v(t,\cdot)\|_{\lnorm 1(\L_t(x_1))} \\
\leq &C \left(\|\iu-\iv\|_{\lnorm 1(0,x_1)} +  \int^{t}_0 |g^u_1(s)-g^v_1(s))|ds \right),\quad\forall t\in [0,\hat{\tau}_1(x_1)],
\end{split}
\end{equation}
\begin{equation}
\label{est:stability-right-triangle}
\begin{split}
&\|u(t,\cdot)-v(t,\cdot)\|_{\lnorm 1(\mathfrak{R}_t(x_0))}\\
\leq & C \left(\|\iu(0,\cdot)-\iv(0,\cdot)\|_{\lnorm 1(x_0,L)} + \int^{t}_0 |g^u_2(s)-g^v_2(s)|ds \right),\quad \forall t\in[0,\hat\tau_2(x_0)]
\end{split}
\end{equation}
where
\[
\begin{split}
\L_t(x_1):=\left\{x\ |\ 0<x<x_1(\hat \tau_1(x_1)-t)/{\hat \tau_1(x_1)}\right\},\\
\mathfrak{R}_t(x_0):=\left\{x\ |\ (L-x_0)t/\hat{\tau}_2(x_0)+x_0<x<L\right\},
\end{split}
\]
and there exists a positive constant $ C(T) $ depending on time $ T $, such that
\begin{equation}
\label{e:stability}
\begin{split}
&\|u(t,\cdot)-v(t,\cdot)\|_{\lnorm 1(0,L)}\\
\leq & C(T)  \left(\|\iu-\iv\|_{\lnorm 1(0,L)} + \sum_{i=1,2}\int^{t}_{0} \big|g^u_i(s)-g^v_i(s)\big|ds \right),\quad \forall t\in (0,T).
\end{split}
\end{equation}
\end{proposition}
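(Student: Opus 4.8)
The plan is to prove the proposition in three stages: uniform a priori bounds on the $\e$-solutions, passage to the limit to produce the entropy solution with the estimates \eqref{bd:tv}, and finally the $\lnorm 1$ stability estimates \eqref{est:stability-left-triangle}--\eqref{e:stability}. For the first stage I would invoke the front tracking construction of \cite{Li_Yu-onesided2016}: for each small $\e>0$ it yields an $\e$-solution whose wave interactions and boundary reflections are governed by a Glimm-type functional $\upe(t)=V^{\e}(t)+C\,\qe(t)$, where $V^{\e}$ is the total strength of the physical and non-physical fronts and $\qe$ the interaction potential. Because all characteristic families are linearly degenerate, the interaction estimates show that $\upe(t)$ is almost non-increasing in $t$, up to the contribution of the incoming boundary fronts, which is controlled by $\tv{g_i}{0<t<T}$. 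This gives the uniform bound $\tv{\ue(t,\cdot)}{0<x<L}\le C(T)\lug$; since the slopes and number of fronts are uniformly controlled, the maps $t\mapsto\ue(t,\cdot)$ are uniformly Lipschitz in $\lnorm 1$, and the range condition $\ue\in\brz$ is built into the scheme.

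In the second stage I would apply Helly's theorem, as already indicated before the statement, to extract a subsequence $\unu\to u$ in $\lnorm 1((0,T)\times(0,L))$; the uniform total variation and Lipschitz bounds pass to the limit and yield \eqref{bd:tv} together with $u(t,x)\in\brz$ a.e. That $u$ is a weak solution follows because each $\unu$ solves the conservation law up to the $O(\enu)$ defect carried by the non-physical fronts, whose total strength tends to zero; the entropy inequality \eqref{d:entropy} is obtained in the same way, and in fact holds as an equality, since linear degeneracy forces every physical front to be a contact discontinuity. The initial trace and the boundary traces are recovered from the approximate initial and boundary conditions in Definition \ref{def:epsilon-sol}.

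The heart of the argument, and the step I expect to be the main obstacle, is the $\lnorm 1$ stability. I would compare two $\enu$-solutions $\unu,\vnu$ through a weighted Lyapunov functional $\Phi(\unu,\vnu)$ of Bressan--Liu--Yang type, equivalent to $\|\unu-\vnu\|_{\lnorm 1(0,L)}$, and show that along the front tracking flow
\[
\dt\,\Phi(\unu,\vnu)\le C\big(|g^u_1(t)-g^v_1(t)|+|g^u_2(t)-g^v_2(t)|\big)+O(\enu).
\]
The delicate points are the boundary terms generated when fronts of $\unu$ and $\vnu$ are reflected at $x=0$ and $x=L$: these are bounded using the non-characteristic condition \eqref{h:nonzero-char} and the invertibility of the boundary maps \eqref{h:bc}, so that a difference in the incoming data is absorbed into the source $|g^u_i-g^v_i|$, while checking that the interaction terms in $\dt\,\Phi$ stay controlled again relies on linear degeneracy. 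Integrating over $[0,t]$ and letting $\nu\to\infty$ gives the global estimate \eqref{e:stability}, the constant $C(T)$ reflecting the amplification accumulated over the finitely many boundary reflections occurring in $[0,T]$.

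Finally, for the localized estimates \eqref{est:stability-left-triangle}--\eqref{est:stability-right-triangle} I would exploit finite propagation speed. The slanted edge of $\L(x_1)$ moves leftward at the maximal speed $\max_{u}|\lambda_1(u)|$, so by the domain-of-dependence property the solution on $\L(x_1)$ is determined solely by the initial data on $(0,x_1)$ and the boundary data $g_1$ at $x=0$; restricting $\Phi$ to this triangle and discarding the flux through its slanted edge, which by finite propagation speed carries no incoming information, removes the reflection off $x=L$ altogether, so that the resulting constant $C$ is independent of $T$, $x_0$ and $x_1$ and gives \eqref{est:stability-left-triangle}. The symmetric argument on $\mathfrak{R}(x_0)$, whose slanted edge moves rightward at speed $\max_{u}\lambda_n(u)$, yields \eqref{est:stability-right-triangle}.
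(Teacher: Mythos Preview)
Your proposal is broadly correct and follows the standard Bressan--Liu--Yang route: uniform Glimm-type bounds on the $\e$-solutions, Helly compactness, and a weighted Lyapunov functional $\Phi$ for $\lnorm 1$ stability. Note, however, that the paper does not give a self-contained proof of this proposition at all: it is stated as a result imported from \cite{Li_Yu-onesided2016} and \cite{Yu-BCLD}, with only a short sketch in the paragraphs preceding the statement.

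That sketch does indicate a genuine structural difference from what you propose. In the paper's outline, the \emph{triangle} estimates \eqref{est:stability-left-triangle}--\eqref{est:stability-right-triangle} come first, established directly for the $\e$-solutions on $\L(x_1)$ and $\mathfrak{R}(x_0)$; the global estimate \eqref{e:stability} on $(0,L)$ is then obtained \emph{by induction}, covering the strip $\{0<t<T\}$ by successive layers of such triangles and iterating the local stability. Your plan runs in the opposite direction: you derive the global estimate on $(0,L)$ first via $\dt\,\Phi$, and then obtain the triangle estimates by restricting $\Phi$ to the shrinking domains and discarding the outgoing flux through the slanted edge. Both orderings are legitimate. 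The paper's triangle-first-then-induct approach has the advantage that the basic step involves only one boundary, so the reflection analysis is simpler and the $T$-independent constant $C$ in \eqref{est:stability-left-triangle}--\eqref{est:stability-right-triangle} falls out immediately; the price is the bookkeeping of the induction, which is where the $T$-dependent constant $C(T)$ in \eqref{e:stability} arises. Your approach avoids the induction but requires handling both boundary reflections in $\dt\,\Phi$ simultaneously, and the restriction of $\Phi$ to a moving triangular domain needs a little care (the weights in $\Phi$ are global objects, so one must check that the contribution of fronts outside the triangle does not pollute the estimate).

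One minor remark: your observation that the entropy inequality becomes an equality because all families are linearly degenerate is correct here under (H4), but the paper explicitly notes that Proposition~\ref{t:entropy-solution} holds more generally (also for genuinely nonlinear families), so this feature is incidental to the proof rather than essential.
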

In particular, \eqref{e:stability} implies that the solution provided by Proposition \ref{t:entropy-solution} is independent of different choices of the convergent sequence of $\e$-solutions.

\begin{remark}
\label{r:entropy-solution}
Under the assumption that system \eqref{e:intro} possesses a convex entropy $ \zeta(u) $, the solution $ u=u(t,x) $ given by Proposition \ref{t:entropy-solution} is actually an entropy solution to the problem \eqref{eqt:ibvp} on the domain $ \D $, and the equality holds in \eqref{d:entropy} (see \cite[Section 7.4]{Bressan2000}).
\end{remark}

\begin{remark}
\label{r:det-domain}
As we mentioned in \cite{Li_Yu-onesided2016}, according to \eqref{est:stability-left-triangle} (resp. \eqref{est:stability-right-triangle}), the triangle domain $ \L(x_1) $ (resp. $ \mathfrak{R}(x_0) $) is the \emph{determinate domain} of the solution to one-sided initial-boundary value problem \eqref{e:intro} with the initial data on the interval $ (0,x_1) $ (resp. $ (x_0,L) $) and the boundary condition on $ x=0 $ (resp. $ x=L $).
In particular, let $ u=u(t,x) $ be the solution to problem \eqref{eqt:ibvp} on the domain $ \D $ given by Proposition \ref{t:entropy-solution}, with $ \lug $ sufficiently small. For any given $ x_0\in (0,L) $, if $ \iu\equiv 0 $ on $ (x_0,L) $ and $ g_2\equiv 0 $ on the interval $ (0,\hat{\tau}_2(x_0)) $, then $ u\equiv 0 $ on the domain $ \mathfrak{R}(x_0)\cap \D $.
\end{remark}

\vspace{6pt}

\subsection{Some further properties of $\e$-approximate front tracking solutions and solutions}
\label{s:further-properties}
The following lemma can be deduced from Lemma 2.10 in \cite{Li_Yu-onesided2016}.
\begin{lemma}
\label{l:approx-boundary}
Suppose $ \unu $ is $ \enu $-solutions to the mixed initial-boundary value problem \eqref{eqt:ibvp}. Then, up to a subsequence, as $ \nu\to \infty $ we have
\begin{eqnarray*}
\|\unu(\cdot,0+)-u(\cdot,0+)\|_{\lnorm{\infty}}\to 0,\\
\|\unu(\cdot,L-)-u(\cdot,L-)\|_{\lnorm{\infty}}\to 0.
\end{eqnarray*}
\end{lemma}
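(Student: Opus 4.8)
The plan is to establish the convergence of the trace at $x=0$ and to deduce the one at $x=L$ by the reflection $x\mapsto L-x$, which exchanges the two non-characteristic boundaries and, by (H4), preserves the linear degeneracy of every characteristic family; thus the two boundaries play identical roles. Throughout, $u$ denotes the limit solution of Proposition \ref{t:entropy-solution}. Near $x=0$ I regard $x$ as the evolution variable and study the family of boundary traces $t\mapsto\unu(t,0+)$, which by Definition \ref{def:epsilon-sol} are piecewise constant in $t$ with finitely many jumps. The first step is a uniform bound $\tv{\unu(\cdot,0+)}{0<t<T}\le \oo\,\lug$: every jump of $\unu(\cdot,0+)$ is created either by a physical front reaching $\{x=0\}$ or by a jump of the piecewise-constant boundary datum $\goe$ approximating $\go$, and since under (H4) every front is a contact discontinuity whose strength is not amplified through interactions, the total strength of fronts crossing $\{x=0\}$ together with the variation of $\goe$ is controlled by $\lug$ uniformly in $\nu$.

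Granted this bound, I apply Helly's theorem to extract a subsequence along which $\unu(\cdot,0+)\to w$ in $\lnorm1(0,T)$ and pointwise a.e., for some function $w$ of bounded variation, and then identify $w=u(\cdot,0+)$. Since $\unu\to u$ in $\lnorm1((0,T)\times(0,L))$, Fubini produces points $x_j\downarrow0$ with $\|\unu(\cdot,x_j)-u(\cdot,x_j)\|_{\lnorm1(0,T)}\to0$ along the subsequence. Because $x=0$ is non-characteristic by (H3), the rightward formulation \eqref{e:right-intro} makes $x\mapsto\unu(\cdot,x)$ uniformly Lipschitz into $\lnorm1(0,T)$, so $\|\unu(\cdot,x_j)-\unu(\cdot,0+)\|_{\lnorm1(0,T)}\le \oo\,x_j$ uniformly in $\nu$; together with the BV trace property $u(\cdot,x_j)\to u(\cdot,0+)$ in $\lnorm1(0,T)$, a triangle-inequality argument in which I send first $x_j\to0$ and then $\nu\to\infty$ gives $w=u(\cdot,0+)$. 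This already yields convergence of the traces in $\lnorm1(0,T)$.

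The hard part is upgrading $\lnorm1$ to $\lnorm\infty$. At every continuity point of the BV function $u(\cdot,0+)$, pointwise convergence and the uniform modulus of continuity supplied by the total-variation bound give locally uniform convergence by a Dini-type argument, so no trouble arises off the jump set. The obstruction is concentrated near the (at most countably many) jump times of $u(\cdot,0+)$: for a general BV sequence, $\lnorm1$ convergence with uniform total variation does not force $\lnorm\infty$ convergence, because by \eqref{d:approx-shock} an approximating front may reach $\{x=0\}$ at a time displaced by $\oo\,\enu$, creating an $\mathrm{O}(1)$ discrepancy on a $t$-interval of length $\oo\,\enu$. The resolution must use the rigidity of the linearly degenerate structure: by (H4) and Lemma \ref{l:ecm} each contact moves with the exact characteristic speed $\la_{k_\alpha}$, so the times at which contacts arrive at $\{x=0\}$, and the jump times inherited from $\goe$, are pinned to those of $u(\cdot,0+)$ up to an error that the essential supremum absorbs along the extracted subsequence. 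Making this jump-location control precise is exactly the content of Lemma 2.10 of \cite{Li_Yu-onesided2016}, proved there at the boundary carrying the linearly degenerate families; since here all families are linearly degenerate, that argument applies verbatim at $x=0$ and, by the reflection above, at $x=L$. I expect this pinning of the jump locations to be the crux, with the compactness and trace-identification steps being routine.
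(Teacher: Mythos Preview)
Your proposal is correct and takes essentially the same approach as the paper: the paper does not give an independent proof but simply states that the lemma ``can be deduced from Lemma 2.10 in \cite{Li_Yu-onesided2016},'' and your argument likewise reduces the crux (the $\lnorm\infty$ upgrade near jump times) to exactly that lemma, while supplying the routine compactness and trace-identification steps that the paper omits. One minor inaccuracy in your heuristic: by \eqref{d:approx-shock} the fronts in the $\e$-solution move at speeds only $\oo\e$-close to $\la_{k_\alpha}$, not at the exact characteristic speed, so the ``pinning'' of jump locations is not automatic from (H4) alone but is part of what Lemma 2.10 establishes.
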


As we mentioned in Section \ref{s:structure}, in order to prove Theorem \ref{t:twoside-cl}-\ref{t:ts-lcl}, besides of the well-posedness of semi-global solution to problem \eqref{eqt:ibvp}, we also need prove that the $\e$-solution to the forward problem is also a $ \e $-solution in the leftward/rightward  /backward sense. In \cite{Li_Yu-onesided2016} and \cite{Yu-BCLD}, we prove that for the system \eqref{e:intro} satisfying Hypothesis (H1)-(H3) and the assumption that all negative (resp. positive) eigenvalue are linear degenerate, the $\e$-solution to the forward problem is also a $\e$-solution to \eqref{e:right-intro} in the rightward (resp. leftward) sense. Therefore, with additional Hypothesis (H4), we can obtain the following lemma.
\begin{lemma}
If $ \ue $ is an $ \e $-solution to the forward problem of \eqref{e:intro}, then $ \ue $ is also an $ \e $-solution to the system \eqref{e:right-intro} in the leftward/rightward sense . And vice versa.
\end{lemma}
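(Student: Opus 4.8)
The plan is to deduce this lemma as a corollary of the two one-directional equivalences already established in \cite{Li_Yu-onesided2016} and \cite{Yu-BCLD}, using the fact that Hypothesis (H4) makes both of their hypotheses hold at once. Those papers prove that if all negative characteristic families of \eqref{e:intro} are linearly degenerate, then every $\e$-solution of the forward problem is an $\e$-solution of \eqref{e:right-intro} in the rightward sense; and that if all positive characteristic families are linearly degenerate, then every forward $\e$-solution is an $\e$-solution of \eqref{e:right-intro} in the leftward sense. Under (H4) all characteristic families — negative and positive alike — are linearly degenerate, so both hypotheses are satisfied simultaneously. Hence a forward $\e$-solution $\ue$ is at once a rightward and a leftward $\e$-solution of \eqref{e:right-intro}, which is the forward implication.

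First I would record the structural facts that make the axis swap $(t,x)\mapsto(x,t)$ admissible. By (H3) no eigenvalue $\la_i(u)$ vanishes on $\brz$, so for \eqref{e:right-intro} — obtained by reading $x$ as the evolution variable — the relevant matrix is $(DG(u))^{-1}DH(u)$, whose eigenvalues are the reciprocals $1/\la_i(u)$ with the same eigenvectors $r_i(u)$. These reciprocals are finite, nonzero, and inherit the constant-multiplicity ordering of (H2), so the swapped system again satisfies (H1)-(H3). Moreover linear degeneracy is preserved: since $D(1/\la_i)\cdot r_i=-\la_i^{-2}\,D\la_i\cdot r_i$, the relation $D\la_i\cdot r_i\equiv 0$ of (H4) gives $D(1/\la_i)\cdot r_i\equiv 0$, so (H4) holds for \eqref{e:right-intro} as well. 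In particular the $i$-contact (= shock = rarefaction) curves of the two systems coincide, being integral curves of the common field $r_i$, so the contact-discontinuity structure in clause (2) of Definition \ref{def:epsilon-sol} is identical, while the Rankine--Hugoniot relation $G(u^+)-G(u^-)=s\,(H(u^+)-H(u^-))$ is symmetric under $s\mapsto 1/s$.

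Next I would check that the remaining clauses of Definition \ref{def:epsilon-sol} transform correctly. A front that is a straight line of slope $\dot x_\alpha$ in the forward picture becomes a straight line of reciprocal slope after the swap; since physical fronts have $\dot x_\alpha$ close to some $\la_{k_\alpha}(u)$, which by (H3) is bounded and bounded away from $0$, both the slope and its reciprocal are finite and nonzero, so clause (1) is respected. For the non-physical fronts of clause (3), the two forward speed regimes $\hat\la>\sup_{u,i}|\la_i(u)|$ and $0<\hat\la<c$ map under $\hat\la\mapsto 1/\hat\la$ precisely onto the slow and fast regimes required of non-physical fronts for \eqref{e:right-intro} — here one uses that (H3) forces $|\la_i(u)|>c$ for all $i$ — and the bound $\sum_{\alpha\in\NP}|\ue(t,\xa+)-\ue(t,\xa-)|\le\e$ on their total strength is the same sum and is unaffected by relabelling the evolution variable. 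These are exactly the verifications carried out in \cite{Li_Yu-onesided2016} and \cite{Yu-BCLD} for each single direction; (H4) only serves to run both arguments at once.

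Finally, the converse (``and vice versa'') follows from the involutivity of the swap. Since \eqref{e:right-intro} satisfies (H1)-(H4) as shown above, I would apply the very same one-directional results to \eqref{e:right-intro} regarded as the ``forward'' system, whose own swapped form is \eqref{e:intro}; this yields that a rightward (resp. leftward) $\e$-solution of \eqref{e:right-intro} is a forward $\e$-solution of \eqref{e:intro}. I expect the main obstacle to be bookkeeping rather than conceptual: one must track how the two non-physical speed regimes interchange under $\hat\la\mapsto 1/\hat\la$ and confirm that the constant $c$ of (H3) together with $\sup_{u,i}|\la_i(u)|$ still bracket the admissible speeds after the swap, so that clause (3) is neither violated nor vacuous in the new time direction.
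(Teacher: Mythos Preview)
Your proposal is correct and matches the paper's approach exactly: the paper states this lemma without proof, simply observing that \cite{Li_Yu-onesided2016} and \cite{Yu-BCLD} establish the forward-to-rightward (resp. forward-to-leftward) equivalence under the assumption that all negative (resp. positive) characteristic families are linearly degenerate, and that (H4) makes both hypotheses hold at once. Your additional paragraphs spelling out the axis-swap bookkeeping and the involutivity argument for ``vice versa'' are finer detail than the paper supplies, but the logical skeleton is identical.
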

\label{l:esflr}
This lemma immediately implies the equivalence between $ \e $-solution in the forward sense and backward sense of the system \eqref{e:intro}, that is
\begin{lemma}
\label{l:esfb}
If $ \ue $ is an $ \e $-solution to the forward problem of \eqref{e:intro}, then $ \ue $ is also an $ \e $-solution to the system \eqref{e:intro} in the backward sense. And vice versa.
\end{lemma}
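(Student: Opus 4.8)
The plan is to obtain the forward--backward equivalence by composing the statement of Lemma \ref{l:esflr} with its mirror image under the interchange of the roles of $t$ and $x$. Observe first that \eqref{e:right-intro} is nothing but \eqref{e:intro} rewritten, so there is a single conservation law and four notions of $\e$-solution for it, distinguished only by the propagation direction: forward $(+t)$, backward $(-t)$, rightward $(+x)$ and leftward $(-x)$. In this language Lemma \ref{l:esflr} says exactly that $(+t)$ coincides with both $(+x)$ and $(-x)$, and Lemma \ref{l:esfb} asks us to append $(-t)$ to that list.

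First I would invoke Lemma \ref{l:esflr} as stated: a forward $\e$-solution $\ue$ of \eqref{e:intro} is a rightward $\e$-solution of \eqref{e:right-intro}, that is, an $\e$-solution with $x$ in the role of the evolution variable. Next I would apply Lemma \ref{l:esflr} a second time, now reading \eqref{e:right-intro} as the ``forward'' problem in the evolution variable $x$ with $t$ as the spatial variable. This relabelling is admissible because the structural hypotheses are symmetric in the two variables: by (H3) each $\la_i(u)$ is bounded away from $0$, so $DG(u)$ is non-singular and the $x$-evolution is governed by $(DG)^{-1}DH$, whose eigenvalues are $1/\la_i(u)$ and whose right eigenvectors are again $r_i(u)$; by (H4) every such field stays linearly degenerate, since $D(1/\la_i)\cdot r_i=-\la_i^{-2}\,D\la_i\cdot r_i\equiv 0$, and the relevant boundaries $t=0,T$ are non-characteristic for the $x$-problem just as $x=0,L$ were for the $t$-problem. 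The mirror form of Lemma \ref{l:esflr} then gives that $(+x)$ coincides with both $(+t)$ and $(-t)$; in particular $(+x)\equiv(-t)$. Chaining the two steps yields $(+t)\equiv(+x)\equiv(-t)$, which is the asserted equivalence of the forward and backward senses of \eqref{e:intro}. The ``and vice versa'' clause is free, because every link above is a genuine equivalence, being supplied with its own converse by the ``and vice versa'' of Lemma \ref{l:esflr}.

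The reason the argument goes through, and the point I would emphasise, is that under (H4) every physical front of an $\e$-solution is a contact discontinuity: by condition (2) of Definition \ref{def:epsilon-sol} its two states lie on a single curve (rarefaction and shock curves coincide for a linearly degenerate field), and by Lemma \ref{l:ecm} its Rankine--Hugoniot relation \eqref{condition-1} and speed $s=\la(u^-)=\la(u^+)$ carry no preferred orientation in time or in space. Hence reversing the direction of propagation cannot convert an admissible front into an inadmissible one, and the three defining conditions of Definition \ref{def:epsilon-sol} --- together with the uniform smallness of the non-physical fronts --- are insensitive to the sign of the evolution variable. I expect the only real work to be bookkeeping rather than analysis: under the $t\leftrightarrow x$ relabelling one must check that the approximate initial/boundary tolerances of Definition \ref{def:epsilon-sol} are transported correctly, the data prescribed at the level $t=0$ for the forward problem becoming terminal data at $t=T$ for the backward problem while the traces at $x=0,L$ keep their role, so that the reinterpreted $\ue$ really satisfies the tolerances demanded of a backward $\e$-solution.
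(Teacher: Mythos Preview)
Your proposal is correct and is exactly the argument the paper has in mind: the paper gives no separate proof of Lemma \ref{l:esfb} but simply states that it ``immediately'' follows from Lemma \ref{l:esflr}, and your composition $(+t)\leftrightarrow(+x)\leftrightarrow(-t)$ via two applications of Lemma \ref{l:esflr} (the second with the roles of $t$ and $x$ swapped) is precisely how that immediate implication is cashed out. Your verification that (H1)--(H4) transfer to the $x$-evolution problem is the only content needed to justify the swap, and it is done correctly.
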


Now, by passing to the limit, we obtain the following.
\begin{proposition}
\label{p:forward-rightward-solution}
Suppose $ u $ is a solution to problem \eqref{e:intro} associated with  some admissible final-boundary condition, then $ u $ is also a solution to \eqref{e:intro} in the forward sense. Moreover, $ u $ is also a solution to \eqref{e:right-intro} in the leftward/rightward sense.
\end{proposition}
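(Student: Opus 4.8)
The plan is to lift the $\e$-solution equivalences of Lemmas~\ref{l:esflr} and \ref{l:esfb} to the level of solutions by passing to the limit, exactly as the notion of solution is defined (Definition~\ref{def:epsilon-sol} together with Proposition~\ref{t:entropy-solution}). By hypothesis, $u$ is the $\lnorm 1((0,T)\times(0,L))$-limit of a sequence $\{\unu\}$ of $\enu$-solutions constructed \emph{backward} from the admissible final-boundary data, with $\enu\searrow 0$. The crucial observation is that a single piecewise-constant function $\unu$ is simultaneously an $\enu$-solution in every one of the four senses (forward, backward, leftward, rightward), so the only work is to verify that the common limit $u$ inherits the correct weak formulation, entropy equality, and trace conditions in each direction.

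First I would establish the forward statement. By Lemma~\ref{l:esfb}, each $\unu$ is also an $\enu$-solution to the forward problem of \eqref{e:intro}, whose initial datum is the inner trace $\unu(0,\cdot)$ and whose boundary data are unchanged. The uniform bounds of Proposition~\ref{t:entropy-solution} (in particular the backward analogue of the total-variation estimate \eqref{bd:tv}) guarantee that the resulting forward data remain small, so the smallness threshold $\delta$ of Proposition~\ref{t:entropy-solution} is met. Since the sequence $\{\unu\}$ is unchanged, it still converges in $\lnorm 1$ to the same $u$; hence, by the construction underlying Proposition~\ref{t:entropy-solution}, the limit $u$ is a solution to the forward problem. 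The one point requiring care is the matching of the traces: by Lemma~\ref{l:approx-boundary} the boundary traces $\unu(\cdot,0+)$ and $\unu(\cdot,L-)$ converge to those of $u$, while the uniform Lipschitz-in-time bound in \eqref{bd:tv} gives equicontinuity in $t$ and hence $\unu(0,\cdot)\to u(0,\cdot)$, so that the approximate initial and boundary conditions of Definition~\ref{def:epsilon-sol} survive the limit.

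For the leftward/rightward statement I would argue identically, now invoking Lemma~\ref{l:esflr}: each forward $\enu$-solution $\unu$ is at the same time an $\enu$-solution to \eqref{e:right-intro} in the leftward/rightward sense, where the roles of $t$ and $x$ are interchanged. Applying the rightward analogue of the compactness and well-posedness machinery of Proposition~\ref{t:entropy-solution} (with $x$ as the evolution variable, as established in \cite{Li_Yu-onesided2016} and \cite{Yu-BCLD}), the same sequence $\{\unu\}$ converges to $u$, which is therefore a solution of \eqref{e:right-intro} in the leftward/rightward sense. Because the limit is taken in the single space $\lnorm 1((0,T)\times(0,L))$, the limit obtained with $x$ as evolution variable coincides with the one obtained with $t$ as evolution variable.

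The main obstacle is the consistency of these limits across the different evolution directions: one must ensure that the uniform-in-$\e$ compactness bounds hold simultaneously for the $t$-evolution and the $x$-evolution, and that the traces on all four sides of $\D$ pass to the limit correctly, so that each weak and entropy formulation is genuinely satisfied by the common limit $u$. This is supplied by the front-tracking construction, in which the total strength of non-physical fronts is $\oo\e$ and, in the present linearly degenerate setting, the entropy production across the physical (contact) fronts vanishes; consequently the entropy inequality \eqref{d:entropy} becomes an equality (cf.\ Remark~\ref{r:entropy-solution}) and is invariant under reversal of the evolution direction, which is precisely what allows the backward and rightward reinterpretations to go through.
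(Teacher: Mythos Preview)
Your proposal is correct and follows exactly the approach indicated in the paper, which simply states that the proposition is obtained from Lemmas~\ref{l:esflr} and~\ref{l:esfb} ``by passing to the limit'' without further detail. Your version supplies the careful justification (uniform BV bounds in both evolution directions, trace convergence via Lemma~\ref{l:approx-boundary}, and the entropy-equality observation of Remark~\ref{r:entropy-solution}) that the paper leaves implicit.
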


\begin{remark}
Since system \eqref{e:right-intro} does not possess a convex entropy in general, even if system \eqref{e:intro} possesses a convex entropy, the solution in the left/rightward sense of system \eqref{e:right-intro} is not necessary to be an entropy solution, but it gives no influence to our consideration and results.
\end{remark}

Applying the same argument in \cite{Li_Yu-onesided2016} (or \cite{Yu-BCLD}), we can obtain the following proposition in which the initial-boundary condition is involved.
\begin{proposition}
\label{p:triangle-rightward}
Suppose that $ u=u(t,x) $ is a forward solution to problem \eqref{eqt:ibvp} on the domain $ \{0<t<T_1,\ 0<x<L\} $ with $ \displaystyle T_1\geq L\max_{u\in \brz}\frac{1}{|\lambda_m(u)|} $ given by Proposition \ref{t:entropy-solution}. Then on the triangle domain $ \{0<t<T_1,\ 0<x<L(T_1-t)/T_1\} $, $ u $ coincides with the leftward (resp. rightward) solution $ \tilde u $ to system \eqref{e:right-intro} given by Proposition \ref{t:entropy-solution}, associated with the initial condition
\begin{equation*}
\label{ic:artifical-rightward}
x=0:\ \tu=u(\cdot,0+)
\end{equation*}
and the following boundary condition reduced from the original initial data $ \iu $:
\begin{equation*}
\label{bc:artifical-rightward}
t=0:\ \tilde b_2(\tu)=\tilde b_2(\iu)\quad (\text{resp.}\ t=0:\ \tilde b_1(\tu)=\tilde b_1(\iu)),
\end{equation*}
where $ \tilde b_2 \in \C1(B_r(0)$; $\rn{m})$ (resp.  $ \tilde b_1  \in \C1(B_r(0)$; $\rn{n-m})$) is arbitrarily given, satisfying the same assumption \eqref{h:bc} for $ b_2 $ (resp. $ b_1 $).
	
The similar results hold for a solution $ u =u(t,x)$ to problem \eqref{eqt:ibvp} in backward sense.	
\end{proposition}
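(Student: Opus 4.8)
The plan is to reduce the assertion to the well-posedness and finite-propagation (determinate-domain) structure of the \emph{rightward} problem, exploiting the forward--rightward equivalence. I would treat the rightward case in detail; the leftward case is entirely symmetric, and the backward statement is obtained by the same argument with Lemma~\ref{l:esflr} replaced by the forward--backward equivalence of Lemma~\ref{l:esfb}.

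\textbf{Reinterpreting $u$ as a rightward solution.} By Proposition~\ref{p:forward-rightward-solution}, the forward solution $u$ of \eqref{eqt:ibvp} is at the same time a solution of the rightward system \eqref{e:right-intro}, with $x$ playing the role of the evolution variable and $t$ that of the spatial variable. I would read $u$ as the solution of a rightward mixed initial-boundary value problem whose ``initial datum'' at $x=0$ is the inner trace $u(\cdot,0+)$ and whose ``boundary'' $t=0$ carries the condition $\tilde b_1(\tu)=\tilde b_1(\iu)$. The essential point is that $u$ itself satisfies this boundary relation for \emph{any} $C^1$ map $\tilde b_1$: since $u(0,\cdot)=\iu$ by property (3) of Definition~\ref{d:entropy-solution} (equivalently by the first bound in \eqref{bd:tv}), one has $\tilde b_1(u)=\tilde b_1(\iu)$ trivially along $t=0$. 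Hence $u$ provides admissible data for exactly the rightward IBVP in the statement.

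\textbf{Construction and uniqueness of $\tu$.} Since $\tilde b_1$ is assumed to satisfy \eqref{h:bc}, the rightward IBVP is well posed, and provided its data are small Proposition~\ref{t:entropy-solution} furnishes a unique solution $\tu$ realized as a limit of rightward $\e$-solutions. Smallness of the rightward data, measured by the corresponding $\Lambda$, follows from \eqref{bd:tv} and Lemma~\ref{l:approx-boundary}, which control the total variations of both $u(\cdot,0+)$ and $\tilde b_1(\iu)$ by $C(T_1)\,\lug$. Here Lemma~\ref{l:approx-boundary} is indispensable: it upgrades the a.e.\ trace to a genuine $L^\infty$ (hence $L^1$) limit of the traces $\unu(\cdot,0+)$ of the forward $\enu$-solutions, while Lemma~\ref{l:esflr} asserts that each $\unu$ is itself a rightward $\enu$-solution of \eqref{e:right-intro}. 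Passing to the limit thus realizes $u$ as a limit of rightward $\e$-solutions carrying the \emph{same} data as $\tu$, and the uniqueness part of Proposition~\ref{t:entropy-solution} (a consequence of the $L^1$ stability \eqref{e:stability}) forces $u=\tu$ on any region determined by these data.

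\textbf{Determinate-domain geometry.} It remains to check that the triangle $\{0<t<T_1,\ 0<x<L(T_1-t)/T_1\}$ lies in the determinate domain of the rightward problem, so that $\tu$ there depends only on the data at $x=0$ and $t=0$ and not on any unprescribed datum along $t=T_1$. Using the rightward analog of the stability estimate \eqref{est:stability-right-triangle} together with Remark~\ref{r:det-domain}, the boundary of this determinate domain is the characteristic of the family with eigenvalue $\lambda_m$ --- the negative eigenvalue closest to $0$, hence of smallest modulus among $\lambda_1,\dots,\lambda_m$ --- issuing from the corner $(t,x)=(T_1,0)$. This characteristic satisfies $|dt/dx|=1/|\lambda_m|\le \max_{u\in\brz}1/|\lambda_m(u)|$, so it lies on or above the line $t=T_1-x\max_{u\in\brz}1/|\lambda_m(u)|$, and the hypothesis $T_1\ge L\max_{u\in\brz}1/|\lambda_m(u)|$ places that line on or above the hypotenuse $t=T_1(1-x/L)$. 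Consequently the triangle, which sits below the hypotenuse, is contained in the determinate domain, and combining with the previous step yields $u=\tu$ throughout the triangle. The main obstacle is the matching in the middle step: one must verify, at the level of $\e$-solutions and before passing to the limit, that the forward solution reinterpreted as a rightward one genuinely carries the limit data $\big(u(\cdot,0+),\tilde b_1(\iu)\big)$, which is precisely where Lemma~\ref{l:approx-boundary} and the equivalence Lemma~\ref{l:esflr} must be used in tandem; the determinate-domain computation is then only a clean matching of the time threshold to the $\lambda_m$-characteristic cone.
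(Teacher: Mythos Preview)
Your proposal is correct and follows essentially the same approach that the paper indicates (the paper does not give an explicit proof but defers to the argument in \cite{Li_Yu-onesided2016} and \cite{Yu-BCLD}): pass to rightward $\e$-solutions via Lemma~\ref{l:esflr}, use Lemma~\ref{l:approx-boundary} to match the trace data, and then invoke the determinate-domain stability estimate for the rightward problem to conclude $u=\tu$ on the triangle. One minor remark: the estimate you want is the rightward analog of \eqref{est:stability-left-triangle} (initial data at $x=0$, boundary at $t=0$), not of \eqref{est:stability-right-triangle}; your geometric verification that $T_1\min_{u\in\brz}|\lambda_m(u)|\ge L$ places the triangle inside that determinate domain is exactly right.
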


In the proof of Theorem \ref{t:twoside-cl}, we need to consider two solutions obtained by solving the system \eqref{e:right-intro} leftward from $x=L$ and rightward from $ x=0$, respectively. Then the combination of these two solutions should be proven to be a solution to the system \eqref{e:intro} in the forward sense. In fact, we have
\begin{proposition}
\label{p:cslrf}
Suppose that $ u=u_l(t,x) $ ($u=u_r(t,x)$) is a solution to problem \eqref{eqt:ibvp} in the leftward (resp. rightward) sense on the domain $ \D_l:=\{0<t<T,\ 0<x<L/2\} $ (resp. $ \D_r:= \{0<t<T, L/2<t<L\} $) with initial condition
\begin{equation*}
x=L/2:\ u=a(t),\quad 0<t<T
\end{equation*}
for some function $ a $ and some suitable boundary conditions. Let
\begin{equation*}
\label{e:cslr}
u(t,x)=
\begin{cases}
u_l(t,x), & (t,x)\in \D_l,\\
u_r(t,x) & (t,x)\in \D_r.
\end{cases}
\end{equation*}
Then $ u=u(t,x) $ is a solution to system \eqref{e:intro}	in the forward sense.
\end{proposition}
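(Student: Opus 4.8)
The plan is to exploit the fact that \eqref{e:intro} and \eqref{e:right-intro} are literally the \emph{same} conservation law $\pt H(u)+\px G(u)=0$: the labels ``forward'', ``leftward'' and ``rightward'' differ only in how the data are prescribed and in the admissibility condition. Hence the weak formulation in Definition \ref{d:entropy-solution}(1) is symmetric in the roles of $t$ and $x$, and the whole content of the proposition reduces to gluing the two pieces across the vertical interface $x=L/2$, where they share the common trace $a(t)$.

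First I would record, via Lemma \ref{l:esflr} and \ref{l:esfb} together with the limit passage behind Proposition \ref{p:forward-rightward-solution}, that $u_l$ is a weak solution of $\pt H(u)+\px G(u)=0$ on $\D_l$ and $u_r$ is a weak solution on $\D_r$. Because all characteristic families are linearly degenerate by (H4), the admissibility carried by the underlying $\e$-solutions is produced by contact discontinuities only, so in the limit the entropy relation \eqref{d:entropy} holds as an \emph{equality}, namely $\pt\eta(u_l)+\px q(u_l)=0$ on $\D_l$ and $\pt\eta(u_r)+\px q(u_r)=0$ on $\D_r$ (cf. Section \ref{s:structure} and Remark \ref{r:entropy-solution}). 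Note that the absence of a convex entropy for \eqref{e:right-intro} is irrelevant here, since the relevant entropy is the original $\eta$ of \eqref{e:intro}.

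Next I would identify the interface traces. Since $x=L/2$ is the initial line for both the leftward problem on $\D_l$ and the rightward problem on $\D_r$, attainment of the initial condition $u=a(t)$ in the trace sense (as in Definition \ref{d:entropy-solution}(3)) gives $u_l(t,(L/2)-)=a(t)=u_r(t,(L/2)+)$ for a.e. $t$, whence $G(u_l(\cdot,(L/2)-))=G(a(\cdot))=G(u_r(\cdot,(L/2)+))$ and likewise for $q$. I would then test the glued $u$ against an arbitrary $\phi\in C^1_c(\D)$, split the integral over $\D_l$ and $\D_r$, integrate by parts in $x$ on each piece, and use the weak (resp.\ entropy-equality) relations to collapse each piece to a single boundary integral along $x=L/2$; the two integrals carry opposite signs and identical integrands, so they cancel. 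This shows both that $u$ is a weak solution of \eqref{e:intro} on $\D$ and that the entropy equality \eqref{d:entropy} holds on $\D$, the uniform total variation bound and $\lnorm{1}$-Lipschitz-in-$t$ regularity being inherited from $u_l$ and $u_r$.

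Finally I would conclude that $u$ is a forward solution in the sense of the paper. The cleanest route is to produce the forward $\e$-solutions directly: approximate $a(t)$ by a single piecewise-constant datum at $x=L/2$, solve leftward on $\D_l$ and rightward on $\D_r$ from that \emph{same} datum, and read each piece as a forward $\e$-solution through Lemma \ref{l:esflr}; since the interface data then agree exactly, no spurious front is created at $x=L/2$ and the union is a genuine forward $\e$-solution whose limit is $u$ (alternatively, the identification follows from uniqueness of the limit solution granted by \eqref{e:stability}). I expect the main obstacle to be exactly this interface bookkeeping: one must ensure the matching of traces leaves no residual flux jump along $x=L/2$ (equivalently, that the Riemann problem with equal left and right states generates no wave), and that the admissibility transported through the equivalence lemmas is genuinely the entropy equality for \eqref{e:intro} rather than some entropy for \eqref{e:right-intro} — both points resting on the linear degeneracy (H4).
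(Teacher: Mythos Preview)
Your final paragraph is exactly the paper's argument: glue the two families of $\e$-solutions built from a \emph{common} piecewise-constant approximation $a^\nu$ of $a$, observe that near the interface $x=L/2$ the only discontinuities are the fronts emitted by the approximate Riemann solver at the jump points of $a^\nu$ (so nothing runs along the segment itself), conclude that the union is a forward $\e$-solution, and pass to the limit. The paper does precisely this in three sentences and nothing more.

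Your second and third paragraphs take a different route --- a direct distributional gluing at the limit level, matching the $G$- and $q$-fluxes through the shared trace $a(t)$ and cancelling the boundary integrals. This is correct analysis and shows that $u$ is a weak (indeed entropy-equality) solution on $\D$, but it is superfluous here: in this paper ``solution'' means \emph{limit of $\e$-solutions}, so the weak formulation alone does not suffice, as you yourself note. The paper skips this detour entirely and works only at the approximate level. What your approach buys is a self-contained verification of the weak/entropy property that does not rely on the equivalence lemmas; what the paper's approach buys is brevity and a direct hit on the actual definition in play. One small point to tighten: at a jump point of $a^\nu$ the leftward solver emits the negative-speed fronts and the rightward solver emits the positive-speed fronts, and you should remark (as the paper implicitly does via the figure) that their union is exactly the full forward approximate Riemann fan at that point, so the glued object satisfies Definition~\ref{def:epsilon-sol} there as well.
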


\begin{proof}
Suppose $ u_l $ (resp. $ u_r $) is the limit of a sequence of $ \enu $-solutions $ \unu_l $ (resp. $ \unu_r $). For each $ \nu\ge 1 $, we define the function
\begin{equation*}
\unu(t,x)=
\begin{cases}
\unu_l(t,x) & (t,x)\in \D_l,\\
\unu_r(t,x) & (t,x)\in \D_r.
\end{cases}
\end{equation*}
In the interior of $ \D_l $ (resp. $ \D_r $), $ \unu_l $ (resp. $\unu_r  $) is an $ \enu $-solution to the system \eqref{e:intro} in the forward sense. It suffices to clarify the situation near the segment $ \mathfrak{S}:={0<t<T}\times\{x=L\} $. In a small leftward (resp. rightward) neighborhood of $\mathfrak{S} $, $ \unu_l $ (resp. $ \unu_r $) is obtained by an approximate Riemann solver at each jump points of $ a^{\nu}=a^{\nu}(t) $ which is a piecewise constant approximation of $ a $. By the finite speed of wave propagation, we know that there is no jump discontinuity for $ \unu $ in a small neighborhood of the segment $ \mathfrak{S}$ except for those jump point of $ a^{\nu} $ (see Figure \ref{f:fm}). Therefore, $ \unu $ is an $ \enu $-solutions to system \eqref{e:intro} in the forward sense. By passing to the limit, we obtain the solution $ u=u(t,x) $ as the limit of sequence $ \unu=\unu(t,x) $ to the system \eqref{e:intro} in the forward sense.
\end{proof}

\begin{figure}[H]
	\centering
	\includegraphics[width=0.4\textwidth]{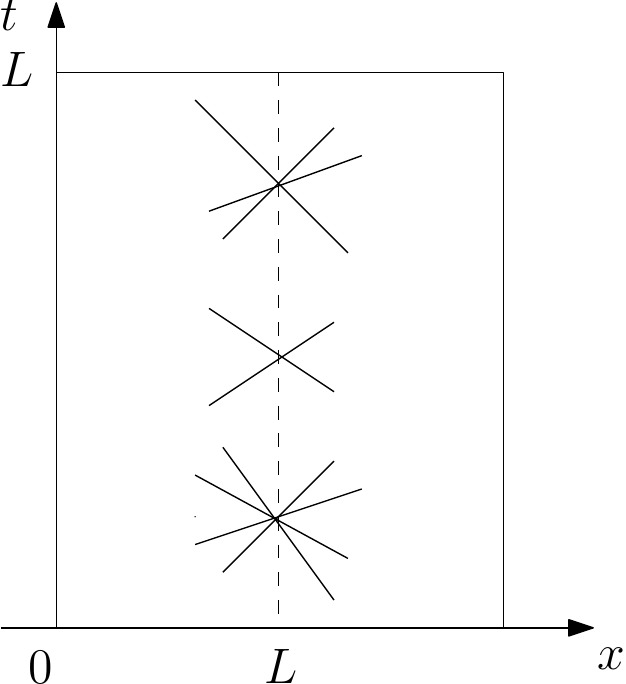}
	\caption{Fonts generated by approximate Riemann solver on the segment $ x=L $.}
	\label{f:fm}
\end{figure}

\section{Local exact boundary controllability}
\label{s:lebc}
Now we are ready to apply the well-posedness of semi-global solutions constructed as the limit of $\e$-solutions to prove Theorem \ref{t:twoside-cl}-\ref{t:ts-lcl}, namely, to realize the local exact boundary controllability for a class of general linear degenerate hyperbolic systems of conservation laws with eigenvalue of constant multiplicity.


\subsection{Two sided boundary control---proof of Theorem \ref{t:twoside-cl}}
In order to get Theorem \ref{t:twoside-cl}, it suffices to establish the following
\begin{lemma}
\label{l:twoside-bc}
Under the same assumptions of Theorem \ref{t:twoside-cl}. Let $ T>0 $ satisfies \eqref{d:tcl-two}. For any given initial data $\iu$ and final data $ u_1 $ with $\displaystyle{\tv{\iu}{0<x<L}+|\iu(0+)|}$ and $\displaystyle{\tv{u_1}{0<x<L}+|u_1(0+)|}$ sufficiently small, system \eqref{e:intro} admits a solution $ u=u(t,x) $ on the domain $\D$ with small $ \displaystyle \tv{u(\cdot,L-)}{0<t<T} +u(0,L-)$, satisfying simultaneously the initial condition
\begin{equation}
\label{e:ic}
t=0: \ u=\iu(x),\quad 0<x<L
\end{equation}
and the final condition \eqref{e:fc}.
\end{lemma}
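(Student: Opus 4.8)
The controls are not constructed directly: once a function $u$ on $\D$ is in hand, one simply sets $g_1=b_1(u(\cdot,0+))$ and $g_2=b_2(u(\cdot,L-))$. Hence the whole problem reduces to exhibiting a single forward solution $u$ of \eqref{e:intro} on $\D$, in the sense of Proposition \ref{t:entropy-solution}, that carries the profile $\iu$ at $t=0$ (condition \eqref{e:ic}) and $u_1$ at $t=T$ (condition \eqref{e:fc}) and whose total variation — in particular that of its trace on $x=L$ — stays small. The mechanism I would use is the equivalence, available here because every characteristic family is linearly degenerate (so the entropy inequality \eqref{d:entropy} is in fact an equality), between forward solutions of \eqref{e:intro} and leftward/rightward solutions of \eqref{e:right-intro}, together with the solvability of \eqref{e:intro} in the backward sense. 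These are precisely Lemma \ref{l:esfb} and Propositions \ref{p:forward-rightward-solution}--\ref{p:cslrf}.

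Concretely, I would build $u$ sideways rather than in time. On the right portion $\{0<t<T,\ L/2<x<L\}$ I solve a rightward problem for \eqref{e:right-intro} and on the left portion $\{0<t<T,\ 0<x<L/2\}$ a leftward problem, glued along $x=L/2$ through a common trace $u(t,L/2)=a(t)$; by Proposition \ref{p:cslrf} any such matched pair is automatically a forward solution of \eqref{e:intro}, so the initial and final conditions are all that remain to be enforced. The point of the sideways viewpoint is that, by Proposition \ref{p:triangle-rightward} and its backward counterpart, the profile $\iu$ enters the rightward/leftward problems as a reduced boundary condition at the end $t=0$ of the $t$-interval, and $u_1$ as the analogous reduced condition at $t=T$. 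Since $\lambda_1,\dots,\lambda_m<0$ and $\lambda_{m+1},\dots,\lambda_n>0$, the rightward problem is well posed by assigning $n-m$ reduced conditions (from $\iu$) at $t=0$ and $m$ (from $u_1$) at $t=T$, with the complementary counts for the leftward problem; hypothesis (H6) guarantees that these reduced boundary operators are admissible. The role of the time condition \eqref{d:tcl-two}, namely $T>L\max\{1/|\lambda_m(0)|,\ 1/\lambda_{m+1}(0)\}$, is exactly to make the interval $(0,T)$ long enough that the determinate regions of the two sideways problems cover $\D$ and that the reductions of Proposition \ref{p:triangle-rightward} are valid across the whole rectangle; this is also why \eqref{d:tcl-two} is sharp.

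The quantitative bounds then come essentially for free from Proposition \ref{t:entropy-solution}: applied to the sideways problems it yields $\tv{u(t,\cdot)}{0<x<L}\le C(T)\,\Lambda$, with $\Lambda$ collecting the small norms of $\iu$, $u_1$ and the chosen free trace $a$, which delivers the required smallness of $\tv{u(\cdot,L-)}{0<t<T}+|u(0,L-)|$; reading off $g_1,g_2$ from the traces finishes the controls, and independence from the approximating sequence follows from \eqref{e:stability}. The step I expect to be the real obstacle is the simultaneous, exact realization of both end conditions: the single matching trace $a$ (equivalently the free sideways initial data) must be chosen so that the components of the solution that are not directly prescribed still reproduce $\iu$ at $t=0$ and $u_1$ at $t=T$ exactly, and this must be consistent across the seam $x=L/2$ while all data remain small. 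I would resolve this through the degree-of-freedom count behind Proposition \ref{p:triangle-rightward} — the prescribed reduced components travel along the positive (resp. negative) families while the free trace supplies the complementary ones — propagating smallness by the Lipschitz and stability estimates \eqref{est:stability-left-triangle}--\eqref{e:stability}, and by checking that each sideways mixed problem invoked is genuinely solvable with small total variation.
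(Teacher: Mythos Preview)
Your overall architecture --- solve leftward on $\{0<x<L/2\}$ and rightward on $\{L/2<x<L\}$ from a common trace $a(t)$ at $x=L/2$, impose reduced boundary conditions coming from $\iu$ at $t=0$ and $u_1$ at $t=T$, then invoke Proposition~\ref{p:cslrf} to assemble a forward solution --- matches the paper exactly. Where your proposal has a genuine gap is the point you yourself flag as ``the real obstacle'': the choice of the interface trace $a$. The reduced boundary conditions at $t=0$ and $t=T$ only pin down $m$ (resp.\ $n-m$) combinations of $u$ there; recovering the \emph{full} profiles $\iu$ and $u_1$ from the sideways problems is not automatic, and a ``degree-of-freedom count'' does not by itself produce a specific $a$ that forces the remaining components to line up. Without a concrete construction of $a$, the argument does not close.

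The paper resolves this by an explicit two-step preparation that you are missing. First (Step~1) one solves a \emph{forward} mixed problem on $\{0<t<T_1,\ 0<x<L\}$ with the true initial data $\iu$ and arbitrary small artificial boundary data, obtaining $u_f$; second (Step~2) one solves a \emph{backward} mixed problem on $\{T-T_1<t<T,\ 0<x<L\}$ with the true final data $u_1$ and artificial boundary data, obtaining $u_b$. The interface trace is then \emph{defined} by $a(t)=u_f(t,L/2)$ on $(0,T_1)$ and $a(t)=u_b(t,L/2)$ on $(T-T_1,T)$, extended in between with small total variation (here \eqref{d:tcl-two} ensures $T>T_1$ so the two pieces do not overlap after shrinking $r$). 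With this particular $a$, the leftward solution $u_l$ and the forward solution $u_f$ share the same sideways ``initial'' data at $x=L/2$ on $(0,T_1)$ and the same reduced boundary condition at $t=0$; Proposition~\ref{p:triangle-rightward} together with the determinate-domain statement (Remark~\ref{r:det-domain}) then forces $u_l\equiv u_f$ on the relevant triangle, so $u_l$ inherits the \emph{full} initial condition $u_l(0,\cdot)=\iu$ on $(0,L/2)$. The same mechanism with $u_r$ and $u_f$ covers $(L/2,L)$, and the analogous argument with $u_b$ yields the final condition. In short, the missing idea is not a counting argument but the auxiliary forward/backward constructions that manufacture the correct $a$ and reduce exact matching to a uniqueness statement on triangles.
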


In fact, let $ u=u(t,x) $ be a solution given by Lemma \ref{l:twoside-bc}. Taking the boundary control as
\[
g_1(t):=b_1(u(t,0+)),\quad g_2(t):=b_2(u(t,L-)),\quad \forall t\in(0,T),
\]
which has small amplitude and total variation, 
we obtain the local exact two-sided boundary controllability desired by Theorem \ref{t:twoside-cl}.

\begin{proof}[Proof of Lemma \ref{l:twoside-bc}]
	Noting \eqref{d:tcl-two}, for $ r>0 $ sufficiently small we have
	\begin{equation}
	\label{e:time-control-brz}
	T>L\max_{u\in \brz}\left\{\frac{1}{|\lambda_m(u)|}+\frac{1}{\lambda_{m+1}(u)}\right\}.
	\end{equation}
	Let
	\begin{equation}
		\label{d:To}
		T_1:=L\max_{u\in \brz}\left\{\frac{1}{|\lambda_m(u)|},{1\over \la_{m+1}(u) }\right\}.
	\end{equation}
Step 1.
Choosing any functions $g'_1:(0,T_1)\to \rn{n-m}$ and $ g'_2:(0,T_1)\to \rn{m}$ with $\displaystyle\tv{g'_1}{0< t< T_1}+|g'_1(0+)|$ and $\displaystyle\tv{g'_2}{0< t< T_1}+|g'_2(0+)|$ sufficiently small, we consider the forward problem of \eqref{e:intro} with the initial condition \eqref{e:ic} and the following artificial boundary conditions:
\begin{equation*}
\begin{cases}
x=0: & b_1(u)=g'_1(t),\\
x=L: & b_2(u)=g'_2(t).
\end{cases}
\end{equation*}
By Proposition \ref{t:entropy-solution} there exists a unique solution $u_f=u_f(t,x)$ as the limit of a sequence of $\enu$-solutions $\unu_f=\unu_f(t,x)$ on the domain $R_f=\{0<t<T_1,0<x<L\}$ with $\displaystyle\sup_{T_1-T<t<T}\tv{u_f(t,\cdot)}{0< x< L}+\tv{u_f(\cdot,0+)}{0< t< T_1}+\tv{u_f(\cdot,L-)}{0< t< T_1}$ sufficiently small and $u_f(t,x)\in \brz$ for a.e. $ (t,x)\in R_f $.
	
\vspace{6pt}

Step 2. We consider the backward mixed initial-boundary value problem of \eqref{e:intro} with the final condition \eqref{e:fc} and the artificial boundary  conditions
\begin{align*}
x=0:\quad  & l_r(u)u=g''_r(t) \qquad (r=1,...,m),\\
x=L:\quad & l_s(u)u=g''_s(t) \qquad (s=m+1,...,n),
\end{align*}
where $ g''_i \ (i=1,...,n) $ are any given functions of $ t $ with $ \displaystyle \tv{g''_i}{T-T_1<t<T}+|g''_i(0+)| $ sufficiently small. By Proposition \ref{t:entropy-solution}, there exists a solution $ u=u_b(t,x) $ on the domain
\[
R_b=\{ T-T_1<t<T,\ 0<x<L \}.
\]
with $\displaystyle \sup_{T_1-T<t<T}\tv{u_b(t,\cdot)}{0< x< L}+\tv{u_f(\cdot,0+)}{T-T_1< t< T}+\tv{u_b(\cdot,L-)}{T_1-T< t< T}$ sufficiently small and $u_b(t,x)\in \brz$ for a.e. $ (t,x)\in R_b $.

\vspace{6pt}	
	
Step 3. Noting \eqref{e:time-control-brz}-\eqref{d:To}, we can find a function $ a(t):(0,T)\to \rn{n} $ with $ \displaystyle \tv{a}{0<t<T} + |a(0+)|$ sufficiently small, such that
\[
a(t)=
\begin{cases}
u_f(t,L/2), & 0<t<T_1,\\
u_b(t,L/2), & T-T_1<t<T.
\end{cases}
\]
	
Now we change the role of variables $t$ and $x$ and consider the leftward problem for system \eqref{e:right-intro}
with the final condition
\begin{equation}
\label{ic:rightward}
x=L/2:\ u=a(t),\quad 0<t<T
\end{equation}
and  the following boundary conditions reduced from the initial data $u=\iu$ and the finial data $u_1$:
\begin{eqnarray}
t=0: && l_r(u) u=l_r(\iu)\iu, \quad r=1,...,m,\quad 0<x<L/2, \label{bc:art-left-0}
\\
t=T: && l_{s}(u) u=l_s(u_1)u_1, \quad s=m+1,...,n,\quad 0<x<L/2, \label{bc:art-left-T}
\end{eqnarray}
where $l_i(u)\ (i=1,...,n)$ are the left eigenvectors of $(DH(u)^{-1}DG(u) $, equivalently, the left eigenvectors of $ (DG(u))^{-1}DH(u) $. A direct computation shows that this boundary condition satisfies the assumption \eqref{h:bc}.
	
	
Still by Proposition \ref{t:entropy-solution}, the leftward problem admits a solution $u_l=u_l(t,x)$ on the domain $ \{ 0<t<T,\ 0<x<L/2 \} $ as the limit of a sequence of $ \enu $-solutions $ \unu_l $. 
\vspace{6pt}	

Step 4. Similarly, the rightward mixed initial-boundary value problem for system \eqref{e:right-intro} with the initial condition \eqref{ic:rightward} and the following reduced boundary conditions:
\begin{eqnarray}
t=0: && l_s(u) u=l_s(\iu)\iu, \quad s=m+1,...n,\quad L/2<x<L \label{bc:art-right-0}
\\
t=T: && l_{r}(u) u=l_r(u_1)u_1, \quad r=1,...,m,\quad L/2<x<L \label{bc:art-right-T}
\end{eqnarray}	
admits a solution $ u_r=u_r(t,x) $ on the domain
\[
R_r(T)=\left\{ 0<t<T,\ L/2<x<L  \right\},
\]
as the limit of a sequence of $ \enu $-solutions $ \unu_r $.
 	
Step 5. Let
\begin{equation}
\label{d:forwardu}
u(t,x)=
\begin{cases}
u_l(t,x), & (t,x)\in R_l(T),\\
u_r(t,x) & (t,x)\in R_r(T).
\end{cases}
\end{equation}
By Proposition \ref{p:cslrf}, $ u=u(t,x) $ is a solution to system \eqref{e:intro}.

Now it remains to show that $u$ verifies the initial condition \eqref{e:ic} and the final condition \eqref{e:fc}.

By Proposition \ref{p:forward-rightward-solution}, both $ u_f $ and $ u_l $ (resp. $ u_r $) are solutions to system \eqref{e:right-intro} in the leftward (resp. rightward) sense, with the same final (resp. initial) condition
\[
x=L/2:\quad u=a(t), \quad 0<t<T_1
\]
and the same boundary condition \eqref{bc:art-left-0} (resp. \eqref{bc:art-right-0}). Then by Proposition \ref{p:triangle-rightward} and Remark \ref{r:det-domain} for the rightward problem, and noting \eqref{d:To},  $u_f$ coincides with $ u_l $ (resp. $ u_r $) on the triangle domain
\begin{align*}
\{0<  t < 2T_1x/L,\  0< x <L/2\}. \\
\left( \text{resp.}\ \left\{ 0<t<2T_1(L-x)/L,\ L/2<x<L \right\}\right)
\end{align*}
Since $ u=u_f(t,x) $ satisfies the initial condition \eqref{e:ic}, this implies that $ u=u(t,x) $ given by \eqref{d:forwardu} verifies \eqref{e:ic}.

Similarly, $ u=u(t,x) $ verifies \eqref{e:fc}. Thus $ u=u(t,x) $ is a desired solution and the proof of Lemma \ref{l:twoside-bc} is complete.
\end{proof}

\subsection{One-sided boundary control---proof of Theorem \ref{t:oneside-cl}}

In order to get Theorem \ref{t:oneside-cl}, it suffices to establish the following
\begin{lemma}
\label{lemma:oneside-bc}
Under the same assumptions of Theorem \ref{t:oneside-cl}. Let $ T>0$ satisfy \eqref{d:tcl-one}. For any given initial data $\iu$ and boundary data $ g_1 $ with $\displaystyle{\tv{\iu}{0<x<L}+|\iu(0+)|}$ and $ \displaystyle \tv{g_1}{0<t<T}+|g(0+)|$ sufficiently small, system \eqref{e:intro} together with the boundary condition
\begin{equation}
\label{bc:reduce-oneside-IBVP}
x=0:\ b_1(u)=g_1(t), \qquad t\in (0,T)
\end{equation}
admits a solution $ u=u(t,x) $ on the domain $\{\ 0< t< T,\ 0< x< L\}$ with small $ \displaystyle \tv{u(\cdot,L-)}{0<t<T} +u(0,L-)$, satisfying simultaneously the initial condition \eqref{e:ic} and the final condition \eqref{e:fc}.
\end{lemma}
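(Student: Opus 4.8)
The plan is to follow the architecture of the two-sided construction in Lemma~\ref{l:twoside-bc}, with the one essential change dictated by the fact that the datum $\go$ on $x=0$ is now prescribed rather than recovered at the end. Instead of splitting the strip at $x=L/2$ and solving the sideways system \eqref{e:right-intro} outward in both directions, I would perform a \emph{single} rightward solve of \eqref{e:right-intro} starting from $x=0$ and running all the way to $x=L$, where the control $\gt:=b_2(u(\cdot,L-))$ is then read off. The whole difficulty is concentrated in producing the full $n$-component sideways initial datum $a(t)=u(t,0+)$ on $(0,T)$: the constraint $b_1(a(t))=\got$ fixes only $\bar m=n-m$ of its components, and the remaining $m$ components must be chosen so that the rightward solution reproduces $\iu$ at $t=0$ and $u_1$ at $t=T$.

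First I would fix, using \eqref{d:tcl-one} and shrinking $r$, a uniform version $T>L\max_{u\in\brz}\{|\la_m(u)|^{-1}+\la_{m+1}(u)^{-1}\}$ and set $T_1:=L\max_{u\in\brz}|\la_m(u)|^{-1}$, $T_2:=L\max_{u\in\brz}\la_{m+1}(u)^{-1}$, so that $T>T_1+T_2$. Next I would solve the forward problem \eqref{eqt:ibvp} with data $\iu$, the prescribed $\go$ on $x=0$, and an arbitrary small artificial control on $x=L$; Proposition~\ref{t:entropy-solution} yields a forward solution $u_f$ up to time $T_1$ whose trace $\varphi(t):=u_f(t,0+)$ automatically satisfies $b_1(\varphi)=\got$ and is compatible with $\iu$. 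Symmetrically I would solve the \emph{backward} problem from the final datum $u_1$ to obtain $u_b$ near $t=T$, with trace $\psi(t):=u_b(t,0+)$ compatible with $u_1$; here I must insist that $u_b$ also respect $b_1=\got$ on $x=0$, which is the delicate point discussed below. I would then glue $a:=\varphi$ on $(0,T_1)$, $a:=\psi$ on $(T-T_2,T)$, and on the (nonempty, since $T>T_1+T_2$) middle interval interpolate inside the fibres $\{\,b_1=\got\,\}$ — smooth $m$-dimensional manifolds because \eqref{h:bc} makes $b_1$ a submersion — keeping $|a(0+)|$ and $\tv{a}{0<t<T}$ small.

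With $a$ in hand I would invoke Proposition~\ref{t:entropy-solution} for the rightward problem of \eqref{e:right-intro} on $\{0<t<T,\ 0<x<L\}$, carrying the initial condition $u=a(t)$ on $x=0$, the $\bar m$ conditions $l_s(u)u=l_s(\iu)\iu$ $(s=m+1,\dots,n)$ on $t=0$, and the $m$ conditions $l_r(u)u=l_r(u_1)u_1$ $(r=1,\dots,m)$ on $t=T$. By Proposition~\ref{p:forward-rightward-solution} the resulting $u$ is a genuine forward solution of \eqref{e:intro}, and by construction $b_1(u(\cdot,0+))=\got$, so \eqref{bc:reduce-oneside-IBVP} holds. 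To verify \eqref{e:ic} and \eqref{e:fc} I would apply Proposition~\ref{p:triangle-rightward} and Remark~\ref{r:det-domain}: on the bottom triangle $\{0<t<T_1,\ 0<x<L(T_1-t)/T_1\}$ the rightward solution coincides with $u_f$, whence $u(0,\cdot)=\iu$, and the backward analogue gives $u(T,\cdot)=u_1$ on the top triangle; it is precisely the bound $T>T_1+T_2$ that lets the two triangles cover the full edges $t=0$ and $t=T$ while leaving room for the interpolation.

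I expect the main obstacle to be arranging the backward solve $u_b$ so that it respects the prescribed $b_1=\got$ on $x=0$. Unlike the forward problem — for which $b_1=\got$ is exactly the correct number $\bar m$ of incoming conditions at $x=0$, as encoded by the first line of \eqref{h:bc} — the backward problem has $m$ incoming characteristics at $x=0$ (the families $\la_1,\dots,\la_m$), so $\bar m$ conditions are too few to be posed alone. This is exactly where hypotheses \eqref{h:os} and \eqref{h:os-b_1} enter: since $\bar m\le m$ and $\rank[Db_1\!\cdot r_1|\cdots|Db_1\!\cdot r_m]=\bar m$, the $\bar m$ constraints $b_1=\got$ single out $\bar m$ independent directions among the $m$ incoming backward families, so they can be completed by $m-\bar m$ arbitrary small artificial conditions into a well-posed backward boundary condition at $x=0$ (the forward/backward equivalence of Lemma~\ref{l:esfb} being what legitimises reading \eqref{h:bc} in reversed time). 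Checking that this completion is admissible, and that the glued $a$ has small total variation, is the technical heart of the argument; the remainder is bookkeeping with the determinate domains already established.
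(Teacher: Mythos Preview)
Your proposal is correct and follows essentially the same route as the paper: a forward solve on $(0,T_1)$ with the prescribed $b_1=\go$, a backward solve on $(T-T_2,T)$ where the $\bar m$ constraints $b_1=\go$ are completed by $m-\bar m$ artificial conditions at $x=0$ (this is precisely where \eqref{h:os}--\eqref{h:os-b_1} are used, via the WLOG assumption $\det[Db_1\cdot r_1|\cdots|Db_1\cdot r_{\bar m}]\neq 0$), then gluing the two traces at $x=0$ into a single $a(t)$ satisfying $b_1(a)=\go$ throughout, and finally a rightward solve of \eqref{e:right-intro} from $x=0$ with the reduced boundary conditions at $t=0$ and $t=T$. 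Your treatment is in fact slightly more explicit than the paper's on two points the paper leaves implicit: the need to interpolate $a$ on the middle interval $(T_1,T-T_2)$ inside the fibres $\{b_1=\got\}$, and the reason the rank hypothesis \eqref{h:os-b_1} makes the backward boundary condition at $x=0$ well-posed.
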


In fact, let $ u=u(t,x) $ be a solution given by Lemma \ref{lemma:oneside-bc}. Taking the boundary control as
\[
g_2(t):=b_2(u(t,L-)),\quad \forall t\in(0,T),
\]
which has small amplitude and total variation, 
we obtain the local exact one-sided boundary controllability desired by Theorem \ref{t:oneside-cl}.

\begin{proof}[Proof of Lemma \ref{lemma:oneside-bc}]
Noting \eqref{d:tcl-one}, for $ r>0 $ sufficiently small we have
\begin{equation}
\label{e:time-control-brz}
T>L\cdot\max_{u\in \brz}\left\{\frac{1}{|\lambda_m(u)|}+\frac{1}{\lambda_{m+1}(u)}\right\}.
\end{equation}
Step 1. Let
\begin{equation}
\label{d:To}
T_1:=L\cdot\max_{u\in \brz}\frac{1}{|\lambda_m(u)|}.
\end{equation}
Choosing an artificial function $g_f$ with $\displaystyle\tv{g_f}{0< t< T_1}+|g_f(0+)|$ sufficiently small, we consider the forward problem of \eqref{e:intro} with the initial condition \eqref{e:ic} and the following boundary conditions:
\begin{equation*}
\begin{cases}
x=0: & b_1(u)=g_1(t),\\
x=L: & b_2(u)=g_f(t),
\end{cases}
\qquad t\in (0,T_1).
\end{equation*}
By Proposition \ref{t:entropy-solution} there exists a unique solution $u_f=u_f(t,x)$ as the limit of a sequence of $\enu$-solutions $\unu_f=\unu_f(t,x)$ on the domain $\{0<t<T_1,0<x<L\}$ with $\displaystyle\tv{u_f(t,\cdot)}{0< x< L}+\tv{u_f(\cdot,0+)}{0< t< T_1}+\tv{u_f(\cdot,L-)}{0< t< T_1}$ sufficiently small and $u_f(t,x)\in \brz$.
	
\vspace{6pt}

Step 2. Let
\[
T_2=L\max_{u\in \brz} {1\over \la_{m+1}(u)}.
\]
Noting \eqref{h:os-b_1}, without loss of generality, we assume that
\begin{equation*}
\label{h:b1barm}
\det \big[ Db_1(u)\cdot r_1(u)\ |\cdots |\ Db_1(u)\cdot r_{\bar{m}}(u) \big]\ne 0.
\end{equation*}
By Proposition \ref{t:entropy-solution}, the backward initial-boundary value problem \eqref{e:intro} admits a solution $ u=u_b(t,x) $ on the domain
\[
R_b=\{ T-T_2<t<T,\ 0<x<L \},
\]
satisfying the final condition \eqref{e:fc}, the boundary condition \eqref{bc:intro-a} and the following artificial boundary condition
\begin{align*}
x=0:\quad & l_p(u)u=g_p(t) \quad p=\bar m,...,m,\\
x=L:\quad & l_s(u)u=g_s(t) \quad s=m+1,...,n,
\end{align*}
where $ g_i:(T-T_2,T)\to \R\ (i=\bar m,...,n) $ are any given functions of $ t $ with $ \displaystyle \tv{g_i}{T-T_2<t<T}+|g_i(0+)|  $ sufficiently small.

Step 3. Let
\[
a(t)=\begin{cases}
u_f(t,0+) & 0<t<T_1,\\
u_b(t,0+) & T-T_2<t<T	.	
\end{cases}
\]
Obviously, $ a(t)\in \brz $  with sufficiently small total variation, and $ u=a(t) $ satisfies the boundary condition \eqref{bc:reduce-oneside-IBVP} at $ x=0 $ on the whole time interval $ (0,T) $.
	
Now we change the role of variables $t$ and $x$ and consider the rightward problem for system
\begin{equation*}
\px G(u)+\pt H(u)=0, \qquad 0<x<L,\ 0<t<T
\end{equation*}
with the initial condition
\begin{equation*}
x=0:\ u=a(t),\quad 0<t<T
\end{equation*}
and the following boundary conditions reduced from the initial state $u=\iu$ and the finial state $u=0$:
\begin{eqnarray*}
t=0: && l_s(u) u=l_s(\iu)\iu, \quad s=m+1,...n,\nonumber 
\\
t=T: && l_{r}(u) u=l_r(u_1)u_1, \quad r=1,...,m, \label{bc:artifical-right}
\end{eqnarray*}
where $l_i(u)\ (i=1,...,n)$ are the left eigenvectors of $(DH(u)^{-1}DG(u) $, equivalently, the left eigenvectors of $ (DG(u))^{-1}DH(u) $. A direct computation shows that this boundary condition satisfies the assumption \eqref{h:bc}.
	
	
Still by Proposition \ref{t:entropy-solution}, the rightward problem admits a solution $u=u(t,x)$ on the domain $ \{ 0<t<T,\ 0<x<L \} $ as the limit of a sequence of $ \enu $-solutions $ \unu $. By Proposition \ref{p:forward-rightward-solution}, $ u $ is also a solution of system \eqref{e:intro} in the forward sense on $ \{0<t<T,\ 0<x<L\} $. Since $ u(t,0)=a(t) $ for a.e. $ t\in (0,T) $, we have
\[
b_1(u(t,0+))=g_1(t), \qquad \text{a.e.}\ t\in (0,T).
\]	
	
Step 4. Now it remains to show that $u$ verifies the initial condition \eqref{e:ic} and the final condition \eqref{e:fc}.
	
By Proposition \ref{p:forward-rightward-solution}, both $ u_f $ and $ u $ are solutions in the rightward sense. Then by Proposition \ref{p:triangle-rightward} and Remark \ref{r:det-domain} for the rightward problem, and noting \eqref{d:To},  $u_f$ coincides with $ u $ on the triangle domain $\{0\leq  t \le T_1,\  0\leq x \le L(T_1-t)/T_1\}$. This implies \eqref{e:ic}. Similarly, we can get \eqref{e:fc}.
	
Thus $ u=u(t,x) $ is a desired solution and the proof of Lemma \ref{lemma:oneside-bc} is complete.
\end{proof}	

\subsection{Two-sided boundary control with less controls---proof of Theorem \ref{t:ts-lcl}}

In order to get Theorem \ref{t:ts-lcl}, it suffices to establish the following
\begin{lemma}
\label{l:twoside-lesscontrol}
Under the same assumptions of Theorem \ref{t:ts-lcl}, for any given initial data $\iu$ any given $ \tilde g_2:(0,T)\to \rn{\bar m} $ with $\displaystyle{\tv{\iu}{0<x<L}+|\iu(0+)|}$ and $ \displaystyle \tv{\tilde g_1}{0<t<T}  $ sufficiently small, system \eqref{e:intro} together with the boundary condition
\begin{equation}
\label{bc:reduce-tl}
x=L:\ \tilde b_2(u)=\tilde{ g}_2, \qquad t\in (0,T)
\end{equation}
admits a solution $ u=u(t,x) $ on the domain $\{\ 0< t< T,\ 0< x< L\}$ with small $ \displaystyle \tv{u(\cdot,L-)}{0<t<T} +u(0,L-)$, satisfying simultaneously the initial condition \eqref{e:ic} and the final condition \eqref{e:fc}.
\end{lemma}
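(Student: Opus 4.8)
The plan is to mirror the constructive scheme of Lemma \ref{lemma:oneside-bc} with the two lateral sides of the strip interchanged: since the prescribed datum $\tilde g_2$ sits on $x=L$, I would manufacture a full boundary trace $a(t)$ on $x=L$ and recover the solution by solving the \emph{leftward} problem for \eqref{e:right-intro} issuing from $x=L$, just as the one-sided proof issues the rightward problem from $x=0$. Put $T_1:=L\max_{u\in\brz}|\lambda_m(u)|^{-1}$ and $T_2:=L\max_{u\in\brz}\lambda_{m+1}(u)^{-1}$; by \eqref{d:tcl-one} one has $T>T_1+T_2$.

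First I would solve two slices abutting the two time-edges. On $\{0<t<T_2,\ 0<x<L\}$ solve the forward problem \eqref{eqt:ibvp} with initial datum $\iu$, an artificial small datum $b_1(u)=g_1^f$ at $x=0$, and at $x=L$ the negative-family condition $b_2(u)=(\tilde g_2,\hat g_2^f)$ whose first $\bar m$ entries are the prescribed $\tilde g_2$; Proposition \ref{t:entropy-solution} furnishes $u_f$ with $\tilde b_2(u_f(\cdot,L-))=\tilde g_2$ on $(0,T_2)$. On $\{T-T_1<t<T,\ 0<x<L\}$ solve the backward problem with final datum $u_1$ and an artificial datum at $x=0$. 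Here the data admissible at $x=L$ are those carried by the positive families, and the rank hypothesis on $\tilde b_2$ allows me to impose $\tilde b_2(u)=\tilde g_2$ as the \emph{full} positive-family boundary condition there, producing $u_b$ with $\tilde b_2(u_b(\cdot,L-))=\tilde g_2$ on $(T-T_1,T)$.

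Next I would splice $a:(0,T)\to\rn n$ by $a=u_f(\cdot,L-)$ on $(0,T_2)$, $a=u_b(\cdot,L-)$ on $(T-T_1,T)$, and on the gap $(T_2,T-T_1)$---nonempty precisely because $T>T_1+T_2$---choose $a$ freely subject to $\tilde b_2(a)=\tilde g_2$; the rank conditions make $\tilde b_2$ a submersion, so its $\bar m$ constrained entries are solved for by the implicit function theorem while the remaining $m$ components stay free, keeping $\tv{a}{0<t<T}+|a(0+)|$ small. With this $a$ as datum on $x=L$, I would solve the leftward problem for \eqref{e:right-intro} under the reduced lateral conditions $l_r(u)u=l_r(\iu)\iu$ $(r=1,\dots,m)$ at $t=0$ and $l_s(u)u=l_s(u_1)u_1$ $(s=m+1,\dots,n)$ at $t=T$, as in Step 3 of Lemma \ref{l:twoside-bc}. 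By Proposition \ref{p:forward-rightward-solution} the resulting $u$ is a forward solution of \eqref{e:intro}; since $u(\cdot,L-)=a$ it has small trace and satisfies $\tilde b_2(u(\cdot,L-))=\tilde g_2$, the remaining controls being read off afterwards as $g_1=b_1(u(\cdot,0+))$ and $\hat g_2=\hat b_2(u(\cdot,L-))$.

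The initial and final conditions would then follow by coincidence on determinate triangles. The analogue of Proposition \ref{p:triangle-rightward} with $x=L$ in place of $x=0$, together with Remark \ref{r:det-domain}, shows that $u$ agrees with $u_f$ on a triangle adjacent to $x=L$ whose base is the entire segment $\{t=0\}$ (and which lies away from $x=0$, so the artificial datum there is harmless), whence $u|_{t=0}=\iu$; symmetrically $u$ agrees with $u_b$ on a triangle covering $\{t=T\}$, whence $u|_{t=T}=u_1$, the two triangles fitting together exactly when $T>T_1+T_2$. The crux I expect is the \emph{double} use of the single datum $\tilde g_2$: it must serve at once as part of the negative-family condition at $x=L$ for the forward slice and as the complete positive-family condition at $x=L$ for the backward slice, which is exactly what the dual transversality in the rank hypothesis guarantees; the attendant corner discrepancies at $(0,L)$ and $(T,L)$ between $\iu,u_1$ and $\tilde g_2$ are absorbed by small contact discontinuities, admissible by the linear degeneracy (H4).
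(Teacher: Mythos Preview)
Your proposal is correct and follows essentially the same constructive scheme as the paper: build a forward slice from $\iu$ and a backward slice from $u_1$, both carrying the constraint $\tilde b_2(u)=\tilde g_2$ at $x=L$; splice the traces at $x=L$ into a single $a(t)$ with $\tilde b_2(a)=\tilde g_2$; solve the leftward problem for \eqref{e:right-intro} from $x=L$ with the reduced lateral conditions; and recover the initial and final data by a determinate-domain coincidence argument.

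The one substantive difference is your assignment of the slice durations. You take the forward slice of length $T_2=L\max_{u\in\brz}\lambda_{m+1}(u)^{-1}$ and the backward slice of length $T_1=L\max_{u\in\brz}|\lambda_m(u)|^{-1}$, whereas the paper uses them the other way around (mirroring the one-sided proof verbatim). For the \emph{leftward} problem issuing from $x=L$, the determinate triangle covering $\{t=0\}$ is bounded by the slowest positive characteristic from the top corner of the slice, so it indeed requires the forward slice to have duration at least $T_2$; symmetrically, covering $\{t=T\}$ requires the backward slice to have duration at least $T_1$. Your assignment is therefore the sharp one that makes the hypothesis $T>T_1+T_2$ suffice without further assumption on the relative sizes of $T_1$ and $T_2$. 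Apart from this refinement (and your more explicit remarks on the implicit-function splicing of $a$ and on the corner compatibilities), the two arguments are the same.
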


In fact, let $ u=u(t,x) $ be a solution given by Lemma \ref{lemma:oneside-bc}. Taking the boundary control as
\begin{align*}
\hat g_1(t):= b_1(u(t,0+)),\quad
\hat g_2(t):=\hat b_2(u(t,L-)),\quad \forall t\in(0,T),
\end{align*}
where $ \hat b_1 $ is the vector function consists of the last $m-\bar m$ components of $ b_1 $, 
we obtain the local exact two-sided boundary controllability with less controls desired by Theorem \ref{t:oneside-cl}.

\begin{proof}[Proof of Lemma \ref{l:twoside-lesscontrol}]
Noting \eqref{d:tcl-one}, for $ r>0 $ sufficiently small, \eqref{e:time-control-brz} holds.

Step 1. Let
\begin{equation}
\label{d:to}
T_1:=L\cdot\max_{u\in \brz}\frac{1}{|\lambda_m(u)|}.
\end{equation}
Choosing artificial boundary data $ g'_1 $ and $\hat g'_2:(0,T_1)\to \rn{m-\bar m}$ and $  $ with $\displaystyle\tv{g'_1}{0< t< T_1}+|g'_1(0+)|$ and $ \displaystyle \tv{\hat{g}'_2}{0<t<T_1}+|\hat{g}'_2(0+)|$ sufficiently small, we consider the forward problem of \eqref{e:intro} with the initial condition \eqref{e:ic}, the boundary condition \eqref{bc:reduce－tl} on $ x=L $ and the following boundary conditions:
\begin{eqnarray*}
x=0: && b_1(u)=g'_1(t),\\
x=L: && \hat b_2(u)=\hat g'_2(t).
\end{eqnarray*}
By Proposition \ref{t:entropy-solution} there exists a unique solution $u_f=u_f(t,x)$ as the limit of a sequence of $\enu$-solutions $\unu_f=\unu_f(t,x)$ on the domain $R_f=\{0<t<T_1,0<x<L\}$ with $\displaystyle\tv{u_f(t,\cdot)}{0< x< L}+\tv{u_f(\cdot,0+)}{0< t< T_1}+\tv{u_f(\cdot,L-)}{0< t< T_1}$ sufficiently small and $u_f(t,x)\in \brz$ for a.e. $ (t,x)\in R_f $.
	
\vspace{6pt}

Step 2.  Let
\begin{equation}
\label{d:tt}
T_2=L\max_{u\in\brz}\frac{1}{\la{_{m+1}(u)}}.
\end{equation}
Choose any functions $ g''_1 $ with $ \displaystyle \tv{g''_1}{T-T_2<t<T}+|g''_1(0+)|$ sufficiently small. By Proposition \ref{t:entropy-solution}, there exist a solution to the mixed initial boundary problem \eqref{e:intro} on the domain
\[
R_b=\{ T-T_2<t<T,\ 0<x<L\},
\]
with the final condition \eqref{e:fc}, the boundary condition \eqref{bc:reduce-tl} and the artificial boundary condition
\[
x=0:\quad b_1(u)=g''_1(t).
\]

Step 3. Noting \eqref{d:tcl-one}, \eqref{d:to} and \eqref{d:tt}, we can find a function $ a(t) $ with $ \displaystyle\tv{a}{0<t<T}+|a(0+) $ sufficiently small, such that
\[
a(t)=
\begin{cases}
u_f(t,L-) & 0<t<T_1,\\
u_b(t,L-) & T-T_2<t<T,	.	
\end{cases}
\]
and $ u=a(t) $ satisfies the boundary condition \eqref{bc:reduce-tl} at $ x=L $ on the whole time interval $ (0,T) $.
	
Now we change the role of variables $t$ and $x$ and consider the leftward problem for system
\begin{equation*}
\px G(u)+\pt H(u)=0, \qquad 0<x<L,\ 0<t<T
\end{equation*}	
with the final condition
\begin{equation*}
x=L:\ u=a(t),\quad 0<t<T
\end{equation*}
and  the following boundary conditions reduced from the initial state $u=\iu$ and the finial state $u=0$:
\begin{eqnarray*}
t=0: && l_r(u) u=l_r(\iu)\iu, \quad s=1,...m,\nonumber 
\\
t=T: && l_{s}(u) u=l_s(u_1)u_1, \quad r=m+1,...,n, \label{bc:artifical-right}
\end{eqnarray*}
where $l_i(u)\ (i=1,...,n)$ are the left eigenvectors of $(DH(u)^{-1}DG(u) $, equivalently, the left eigenvectors of $ (DG(u))^{-1}DH(u) $. A direct computation shows that this boundary condition satisfies the assumption \eqref{h:bc}.
	
	
Still by Proposition \ref{t:entropy-solution}, the leftward problem admits a solution $u=u(t,x)$ on the domain $ \{ 0<t<T,\ 0<x<L \} $ as the limit of a sequence of $ \enu $-solutions $ \unu $. By Proposition \ref{p:forward-rightward-solution}, $ u $ is also a solution of system \eqref{e:intro} in the forward sense on $ \{0<t<T,\ 0<x<L\} $.
	
Step 4. Now it remains to show that $u$ verifies the initial condition \eqref{e:ic} and the final condition \eqref{e:fc}.
	
By Proposition \ref{p:forward-rightward-solution}, both $ u_f $ and $ u $ are solutions in the leftward sense. Then by Proposition \ref{p:triangle-rightward} and Remark \ref{r:det-domain} for the leftward problem, and noting \eqref{d:To},  $u_f$ coincides with $ u $ on the triangle domain $\{0\leq  t \le T_1,\  0\leq x \le L(t-T_1)/T_1\}$. This implies \eqref{e:ic}. Similarly, we can get \eqref{e:fc}.
	
Thus $ u=u(t,x) $ is a desired solution and the proof of Lemma \ref{lemma:oneside-bc} is complete.
\end{proof}

\bibliographystyle{siam}
\bibliography{boundary_control_LD}

\end{document}